\newtheorem{thm}{Theorem}[section]
\newtheorem{prop}[thm]{Proposition}
\newtheorem{cor}[thm]{Corollary}
\newtheorem{lem}[thm]{Lemma}
\theoremstyle{definition}
\newtheorem{defi}[thm]{Definition}
\newtheorem{rem}[thm]{Remark}
\newtheorem{rems}[thm]{Remarks}
\theoremstyle{plain}
\newcommand{\lla}{\langle\!\langle}
\newcommand{\rra}{\rangle\!\rangle}
\renewcommand{\k}{\mathop{\mathrm{Fix}}}
\renewcommand{\r}{\mathop{\mathrm{Res}}}
\renewcommand{\phi}{\varphi}
\newcommand{\alt}{\mathop{\mathrm{Alt}}}
\newcommand{\car}{\mathop{\mathrm{char}}}
\newcommand{\id}{\mathop{\mathrm{id}}}
\newcommand{\disc}{\mathop{\mathrm{disc}}}
\newcommand{\sym}{{\rm Sym}}
\author{\sc{A.-H. Nokhodkar}}
\date{}
\title{Applications of the Wall form to unipotent isometries of index two}
\begin{document}
\maketitle

\begin{abstract}
We investigate the Wall form of unipotent elements of index two in the orthogonal group and obtain a decomposition for these elements.
Also, in characteristic two, the relation between the Wall form and some invariants of the induced involution on the Clifford algebra is studied.
\\
{\bf Keywords.} Wall form, unipotent isometry, algebra with involution, Clifford algebra.
\\
{\bf Mathematics Subject Classification.} 11E88, 11E39, 16W10.
\end{abstract}

\section{Introduction}
The {\it Wall form} of an isometry is a nondegenerate bilinear form defined on its residual space (see \cite[pp. 12--13]{wall}).
According to \cite[Theorem 1.3.1]{wall2}, isometries of a quadratic space can be
classified, up to conjugation, by their Wall forms.
The Wall form was applied in \cite{hahn} and \cite{hahn2} in connection with the Zassenhaus decomposition \cite{zass} of isometries in characteristic $\neq2$.
It was also used in \cite{hahn3} to study the length of elements in
the commutator subgroup of the orthogonal group.
In characteristic $2$, this form was applied in \cite{con} to study the relation between the commutator subgroup of the orthogonal
group and the spinorial kernel of a nondegenerate defective quadratic space.

Let $(V,q)$ be a quadratic space over a field $F$.
An isometry $\tau$ of $(V,q)$ is called {\it unipotent} if $(\tau-\id)^k=0$ for some nonnegative integer $k$.
The minimum integer $k$ with this property is called the {\it unipotency index} of $\tau$.
According to \cite[(6.3)]{grove}, an isometry $\tau$ has unipotency index $2$ if and only if its residual space is totally singular.
Also, unipotent isometries of index $2$ are exactly those isometries whose Wall forms are antisymmetric (see \cite[p. 116]{hahn}).

In this work we investigate some applications of the Wall form to unipotent elements of index $2$ in the orthogonal group.
We start with some observations on {\it indecomposable} isometries (i.e., isometries which admit no invariant regular proper subspace).
In characteristic two, every unipotent isometry of index $2$ is an involution in the orthogonal group.
Such an involution is indecomposable if and only if it is either a $2$-dimensional reflection or a $4$-dimensional {\it basic null involution} (see \cite[Theorem 1]{wiitala}).
Some properties of $4$-dimensional basic null involutions, called {\it interchange isometries} are investigated in \cite{mahmoudi}.

In \S\ref{sec-int}, we generalize the definition of interchange isometries to arbitrary characteristic.
As we shall see in (\ref{defint}), a $4$-dimensional isometry is an inetrchange isometry if and only if it is indecomposable and unipotent of index $2$.
Also, using the Wall form we shall obtain in (\ref{char}) a decomposition of unipotent isometries of index $2$ into orthogonal sums of interchange isometries (and $2$-dimensional reflections if $\car F=2$).
In characteristic two, using the Jordan canonical form of a matrix,
this decomposition was already obtained in \cite[Theorem 2]{wiitala}.
However, the connection between
this decomposition and the Wall form has not been studied.

Let $(V,q)$ be a quadratic space over a field $F$.
Every involution $\tau$ in $O(V,q)$ induces a {\it natural} involution $J_\tau$, introduced by D. B. Shapiro \cite{shapiro}, on the Clifford algebra $C(q)$ satisfying $J_\tau(v)=\tau(v)$ for $v\in V$.
In \cite{mahmoudi4}, it was shown that every totally decomposable algebra with involution over a field of characteristic different from $2$ can be expressed as the Clifford algebra of a quadratic space with a natural involution induced by an involution in the orthogonal group (see \cite{mahmoudi} for a characteristic $2$ counterpart).
In characteristic $2$, it is readily seen that the converse is also true (see (\ref{tota})).
Thus, if $\car F=2$, there exists a natural correspondence between totally decomposable algebras with involution and
Clifford algebras with involution induced by an involution in the orthogonal group.

In \cite{dolphin3} and \cite{mahmoudi2} some invariants of a totally decomposable algebra with orthogonal involution $(A,\sigma)$ in characteristic $2$, such as the {\it Pfister invariant} and the subalgebra $\Phi(A,\sigma)$ were introduced.
In \cite[(6.5)]{mahmoudi2} it was shown that totally decom\-posable ortho\-gonal involutions on a given algebra can be classified by their Pfister invariants.
It was also shown that the Pfister invariant of $(A,\sigma)$ is isometric to an associative bilinear form defined naturally on $\Phi(A,\sigma)$ (see \cite[(5.5)]{mahmoudi2}).
In view of these characterizing properties, it would be interesting to study the relation between these invariants and the Wall form of $\tau$, when $(A,\sigma)$ is expressed as a Clifford algebra with natural involution $J_\tau$.
This investigation, which is the main object of \S\ref{sec-clif}, gives a geometric interpretation of the aforementioned invariants.
As a final application, we obtain in (\ref{totimes}), necessary and sufficient conditions for $(C(q),J_\tau)$ to be isomorphic to the matrix algebra with the transpose involution $(M_{2^n}(F),t)$.

\section{Preliminaries}
Let $V$ be a finite dimensional vector space over a field $F$.
A bilinear form $\mathfrak{b}$ on $V$ is called {\it alternating} if $\mathfrak{b}(u,u)=0$ for every $u\in V$.
Otherwise, $\mathfrak{b}$ is called {\it nonalternating}.
Every nonalternating symmetric bilinear space $(V,\mathfrak{b})$ has an {\it ortho\-gonal basis}, i.e., a basis $(v_1,\cdots,v_n)$ such that $\mathfrak{b}(v_i,v_j)=0$ for every $i\neq j$ (see \cite[(1.17)]{elman}).
In this case, $\mathfrak{b}$ is denoted by $\langle \alpha_1,\cdots,\alpha_n\rangle$, where $\alpha_i=\mathfrak{b}(v_i,v_i)\in F$.
The form $\lla\alpha_1,\cdots,\alpha_n\rra:=\langle1,\alpha_1\rangle\otimes\cdots\otimes\langle1,\alpha_n\rangle$ is called a {\it bilinear $n$-fold Pfister form}.
For a bilinear form $\mathfrak{b}$ we denote the form $\mathfrak{b}\perp\cdots\perp\mathfrak{b}$ ($r$ times) by $r\mathfrak{b}$.

Let $q:V\rightarrow F$ be a quadratic form.
 The bilinear form $\mathfrak{b}_q:V\times V\rightarrow F$ defined by
$\mathfrak{b}_q(u,v)=q(u+v)-q(u)-q(v)$
is called the {\it polar form} of $q$.
A subspace $W$ of $V$  is called {\it regular} if $W\cap  W^\perp=\{0\}$, where $W^\perp=\{x\in V\mid b_q(x,y)=0\ {\rm for\ all}\ y\in W\}$.
Otherwise, $W$ is called {\it singular}.
The pair $(V,q)$ is called a {\it quadratic space} if $V$ is regular.
A quadratic form $q$ is called {\it totally singular} if $\mathfrak{b}_q=0$.
Nontrivial totally singular quadratic forms only exist in characteristic two.
A nonzero vector $v\in V$ is called {\it isotropic} if $q(v)=0$.
A subspace $W$ of $V$ is called {\it totally isotropic} if all nonzero vectors of $W$ are isotropic.

Let $V$ be a $2n$-dimensional vector space over a field $F$ and let $\lambda=\pm1$.
A bilinear form $\mathfrak{b}$ (resp. a quadratic form $q$) on $V$ is called {\it hyperbolic} if it has a {\it hyperbolic basis}, i.e., a
basis $\mathcal{B}=(u_1,v_1,\cdots,u_n,v_n)$ such that: (i) $\mathfrak{b}(x,x)=0$ (resp. $q(x)=0$) for every $x\in\mathcal{B}$;
  (ii) $\mathfrak{b}(u_i,v_i)=\lambda\mathfrak{b}(v_i,u_i)=1$ (resp. $\mathfrak{b}_q(u_i,v_i)=1$) for every $i$; and (iii) all other pairs of
vectors in $\mathcal{B}$ are orthogonal.
The $2$-dimensional hyperbolic bilinear form is called the {\it hyperbolic plane} and is denoted by $\mathbb{H}_\lambda$.

The group of all isometries of a quadratic space $(V,q)$, i.e., the {\it orthogonal group} of $(V,q)$ is denoted by $O(V,q)$.
An isometry $\tau\in O(V,q)$ is called an {\it involution} if $\tau^2=\id$.
The {\it spinor norm} of an isometry $\tau$ is denoted by $\theta(\tau)$ (see \cite[p. 76 and p. 137]{grove}).
The {\it fixed space} and the {\it residual space} of an isometry $\tau$ are defined as~follows:
\begin{align*}
\k(\tau)=\{x\in V\mid \tau(x)=x\}\quad {\rm and} \quad
\r(\tau)=\{\tau(x)-x\mid x\in V\}.
\end{align*}
Note that
$\dim_F\r(\tau)+\dim_F\k(\tau)=\dim_FV$.
Also, it is readily seen that $\r(\tau)=\k(\tau)^\perp$.
The map
$\omega_\tau:\r(\tau)\times \r(\tau)\rightarrow F$
defined by
\[\omega_\tau(\tau(x)-x,\tau(y)-y)=\mathfrak{b}_q(\tau(x)-x,y), \quad {\rm for} \ x,y\in V,\] is called the  {\it Wall form} of $\tau$.
This map is a nondegenerate bilinear form satisfying $\omega_\tau(u,u)=-q(u)$ for $u\in\r(\tau)$ (see \cite[(1.1)]{hahn}).
Also, as stated in \cite[p. 116]{hahn}, this form is symmetric {\rm(}resp. antisymmetric{\rm)} if and only if $\tau^2=\id$ {\rm(}resp. $(\tau-\id)^2=0${\rm)}.

For $u\in V$ with $q(u)\neq0$, the involution $\tau_u\in O(V,q)$ defined by
$\tau_u(x)=x-\frac{\mathfrak{b}_q(u,x)}{q(u)}u$ is called the ({\it orthogonal}) {\it reflection} along $u$.
Also, if $x\in V$ is an isotropic vector and $w\in (Fx)^\perp$, the {\it Eichler transformation} $E_{x,w}:V\rightarrow V$ is~defined by
\[E_{x,w}(v)=v+\mathfrak{b}_q(v,x)w-\mathfrak{b}_q(v,w)x-q(w)\mathfrak{b}_q(v,x)x.\]
The map $E_{x,w}$ is a unipotent isometry of $(V,q)$ with $\r(E_{x,w})=Fx+Fw$.

Let $A$ be a central simple algebra over a field $F$.
An {\it involution} on $A$ is an antiautomorphism of $A$ of order $2$.
We say that $\sigma$ is of the {\it first kind} if $\sigma|_F=\id$.
Involutions of the first kind are either {\it symplectic} or {\it orthogonal} (see \cite[(2.5)]{knus}).
The {\it discriminant} of an orthogonal involution $\sigma$ is denoted by $\disc\sigma$ (see \cite[(7.2)]{knus}).
Also, the set of {\it alternating} and {\it symmetric} elements of $(A,\sigma)$ are defined as~follows:
\[\alt(A,\sigma)=\{a-\sigma(a)\mid a\in A\}\quad {\rm and}\quad\sym(A,\sigma)=\{a\in A\mid\sigma(a)=a\}.\]

\section{Decomposition in terms of the Wall form}\label{sec-int}

\begin{lem}\label{tauid}
For an isometry $\tau$ of a quadratic space $(V,q)$ the following conditions are equivalent:
$(1)$ $(\tau-\id)^2=0$.
$(2)$ $\r(\tau)\subseteq\k(\tau)$.
$(3)$ The restriction of $q$ to $\r(\tau)$ is totally singular.
\end{lem}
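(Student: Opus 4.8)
The plan is to prove the cycle of implications $(1)\Rightarrow(2)\Rightarrow(3)\Rightarrow(1)$, since each step is short once unwound from the definitions. For $(1)\Rightarrow(2)$: if $(\tau-\id)^2=0$, then for any $x\in V$ the vector $\tau(x)-x$ satisfies $(\tau-\id)(\tau(x)-x)=(\tau-\id)^2(x)=0$, which says exactly that $\tau(x)-x\in\k(\tau)$; since such vectors span $\r(\tau)$, we get $\r(\tau)\subseteq\k(\tau)$. The converse direction $(2)\Rightarrow(1)$ is the same computation read backwards: $\r(\tau)\subseteq\k(\tau)$ means $(\tau-\id)$ kills $\r(\tau)$, i.e. $(\tau-\id)^2(x)=(\tau-\id)(\tau(x)-x)=0$ for all $x$, so $(\tau-\id)^2=0$. (This already shows $(1)\Leftrightarrow(2)$, but I will instead route through $(3)$ to hit the remaining equivalence.)

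For $(2)\Rightarrow(3)$: recall from the Preliminaries that $\r(\tau)=\k(\tau)^\perp$ with respect to $\mathfrak{b}_q$. If $\r(\tau)\subseteq\k(\tau)$, then taking orthogonal complements gives $\k(\tau)^\perp\subseteq\r(\tau)^\perp$, i.e. $\r(\tau)\subseteq\r(\tau)^\perp$, so $\r(\tau)$ is totally isotropic for $\mathfrak{b}_q$; equivalently $\mathfrak{b}_q$ restricts to $0$ on $\r(\tau)$. But the Wall form satisfies $\omega_\tau(u,u)=-q(u)$ for $u\in\r(\tau)$, and more generally one checks from its definition that the polar form of $q|_{\r(\tau)}$ is $-(\omega_\tau+\omega_\tau^{\mathrm{t}})$ evaluated on $\r(\tau)$; since antisymmetry of $\omega_\tau$ is equivalent to $(\tau-\id)^2=0$, it is cleaner to argue directly: for $u=\tau(x)-x$ and $v=\tau(y)-y$ in $\r(\tau)$, the identity $\mathfrak{b}_q(\tau(a),\tau(b))=\mathfrak{b}_q(a,b)$ gives $\mathfrak{b}_q(\tau(x)-x,\tau(y)-y)=\mathfrak{b}_q(\tau(x),\tau(y))-\mathfrak{b}_q(\tau(x),y)-\mathfrak{b}_q(x,\tau(y))+\mathfrak{b}_q(x,y)=2\mathfrak{b}_q(x,y)-\mathfrak{b}_q(\tau(x),y)-\mathfrak{b}_q(x,\tau(y))$, which vanishes on $\r(\tau)$ precisely because $\r(\tau)\subseteq\k(\tau)$ forces $\mathfrak{b}_q(u,v)=0$. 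Hence $\mathfrak{b}_q|_{\r(\tau)}=0$, i.e. $q|_{\r(\tau)}$ is totally singular.

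For $(3)\Rightarrow(2)$: suppose $q|_{\r(\tau)}$ is totally singular, so $\mathfrak{b}_q$ vanishes on $\r(\tau)$, meaning $\r(\tau)\subseteq\r(\tau)^\perp$. Applying $\perp$ and using $\r(\tau)^\perp=\k(\tau)$ (equivalently, $\r(\tau)=\k(\tau)^\perp$ together with non-degeneracy of $\mathfrak{b}_q$ on $V$) yields $\r(\tau)\subseteq\k(\tau)$, which is $(2)$. This closes the cycle.

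The only point requiring care is the interplay between the two orthogonality relations on $\r(\tau)$: the ``external'' one coming from $\mathfrak{b}_q$ on $V$ (which identifies $\r(\tau)^\perp=\k(\tau)$) and the ``internal'' condition that $q$ be totally singular on $\r(\tau)$, i.e. that $\mathfrak{b}_q$ restrict to zero there. These coincide exactly when $\r(\tau)\subseteq\k(\tau)$, so the main (minor) obstacle is simply to invoke non-degeneracy of $\mathfrak{b}_q$ on $V$ correctly when taking orthogonal complements; everything else is formal manipulation of $(\tau-\id)$ and the isometry identity $\mathfrak{b}_q(\tau(a),\tau(b))=\mathfrak{b}_q(a,b)$.
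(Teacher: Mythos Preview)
Your argument is correct. The paper does not actually give a proof here; it simply cites \cite[pp.~46--47]{grove} and remarks that the argument there works in any characteristic. What you have written is precisely that standard argument: the equivalence $(1)\Leftrightarrow(2)$ is immediate from the definition of $\r(\tau)$ as the image of $\tau-\id$, and the equivalence $(2)\Leftrightarrow(3)$ follows from $\r(\tau)=\k(\tau)^\perp$ together with nondegeneracy of $\mathfrak{b}_q$ on $V$ (so that $(\k(\tau)^\perp)^\perp=\k(\tau)$).

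One stylistic comment: your paragraph for $(2)\Rightarrow(3)$ already finishes the step cleanly with the orthogonal-complement argument in the first two sentences. The subsequent digression through the Wall form and the explicit expansion of $\mathfrak{b}_q(\tau(x)-x,\tau(y)-y)$ adds nothing and ends in a circular-sounding clause (``vanishes \ldots\ because $\r(\tau)\subseteq\k(\tau)$ forces $\mathfrak{b}_q(u,v)=0$''); you should simply delete it.
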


\begin{proof}
See \cite[pp. 46-47]{grove} (note that the argument given there works in arbitrary characteristic).
\end{proof}

\begin{lem}\label{v'}
Let $(V,q)$ be a quadratic space over a field $F$ and let $\tau$ be an isometry of $(V,q)$ with $(\tau-\id)^2=0$.
If $W$ is a complement of $\r(\tau)$ in $\k(\tau)$, then
\begin{itemize}
\item[$(1)$] $W$ is regular and $\tau=\id_W\perp\tau|_{W^\perp}$.
\item[$(2)$] $\dim_FW^\perp=2\dim_F\r(\tau)$.
\item[$(3)$] $\k(\tau|_{W^\perp})=\r(\tau|_{W^\perp})=\r(\tau)$.
\end{itemize}
\end{lem}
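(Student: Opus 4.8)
The plan is a direct argument in linear algebra, using only the two facts recorded in the preliminaries: that $\r(\tau)=\k(\tau)^\perp$ (all orthogonality is taken with respect to $\mathfrak{b}_q$) and that, since $(V,q)$ is a quadratic space, $\mathfrak{b}_q$ is nondegenerate on $V$, so $\dim_FU+\dim_FU^\perp=\dim_FV$ and $U^{\perp\perp}=U$ for every subspace $U\subseteq V$. The hypothesis $(\tau-\id)^2=0$ enters only through Lemma~\ref{tauid}, which gives the inclusion $\r(\tau)\subseteq\k(\tau)$; I will use this repeatedly.

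For $(1)$ I would first show that $\mathfrak{b}_q|_W$ is nondegenerate. Suppose $x\in W$ is orthogonal to $W$. Since $x\in W\subseteq\k(\tau)$ and $\r(\tau)=\k(\tau)^\perp$, the vector $x$ is also orthogonal to $\r(\tau)$, hence to $\k(\tau)=\r(\tau)+W$; thus $x\in\k(\tau)^\perp=\r(\tau)$, and $x\in W\cap\r(\tau)=\{0\}$. This proves $\mathfrak{b}_q|_W$ nondegenerate, and in particular $W\cap W^\perp=\{0\}$, i.e. $W$ is regular; consequently $V=W\perp W^\perp$. Because $W\subseteq\k(\tau)$ we have $\tau|_W=\id_W$, so $\tau(W)=W$, and since $\tau$ is an isometry it preserves $W^\perp=\tau(W)^\perp$. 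Hence $\tau=\id_W\perp\tau|_{W^\perp}$.

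Part $(2)$ is then a dimension count: from $\dim_F\r(\tau)+\dim_F\k(\tau)=\dim_FV$ and $\k(\tau)=\r(\tau)\oplus W$ we get $\dim_FV=2\dim_F\r(\tau)+\dim_FW$, and comparing with $\dim_FV=\dim_FW+\dim_FW^\perp$ yields $\dim_FW^\perp=2\dim_F\r(\tau)$. For $(3)$, writing an arbitrary vector of $V$ as $w+u$ with $w\in W$, $u\in W^\perp$ and using $\tau w=w$ together with $\tau(W^\perp)=W^\perp$, one reads off $\r(\tau)=\{\tau u-u\mid u\in W^\perp\}=\r(\tau|_{W^\perp})$; also $\k(\tau|_{W^\perp})=\k(\tau)\cap W^\perp$. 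Finally, since $\r(\tau)=\r(\tau|_{W^\perp})\subseteq W^\perp$ while $W\cap W^\perp=\{0\}$, the decomposition $\k(\tau)=\r(\tau)\oplus W$ forces $\k(\tau)\cap W^\perp=\r(\tau)$, so $\k(\tau|_{W^\perp})=\r(\tau)=\r(\tau|_{W^\perp})$.

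I expect the only genuinely delicate point to be the regularity of $W$ in $(1)$: one must notice that an element of $W$ orthogonal to $W$ is automatically orthogonal to all of $\k(\tau)$ — this is exactly where $\r(\tau)=\k(\tau)^\perp$ combined with $\r(\tau)\subseteq\k(\tau)$ is used — and then that such an element lies in $\k(\tau)^\perp\cap W=\r(\tau)\cap W=\{0\}$. Once $W$ is known to be regular, the orthogonal splitting $V=W\perp W^\perp$ and the $\tau$-invariance of $W^\perp$ are routine, and $(2)$ and $(3)$ are bookkeeping with this splitting.
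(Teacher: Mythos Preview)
Your proof is correct and follows essentially the same route as the paper's: the key step in (1) --- showing that an element of $W\cap W^\perp$ is orthogonal to all of $\k(\tau)=W\oplus\r(\tau)$ and hence lies in $\r(\tau)\cap W=\{0\}$ --- is identical, and (2) is the same dimension count. For (3) the paper gets $\k(\tau|_{W^\perp})=\r(\tau|_{W^\perp})$ via a dimension count combined with Lemma~\ref{tauid} rather than your direct intersection argument, but this is a minor variation, not a different approach.
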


\begin{proof}
(1) Let $u\in W\cap W^\perp$ and let $x$ be an arbitrary vector in $\k(\tau)=W\oplus\r(\tau)$.
Write $x=w+v$ for some $w\in W$ and $v\in \r(\tau)$.
Since $u\in W^\perp$, we have $\mathfrak{b}_q(u,w)=0$.
Also, as $u\in W\subseteq\k(\tau)$ and $v\in\r(\tau)=\k(\tau)^\perp$, we get $\mathfrak{b}_q(u,v)=0$.
Thus, $\mathfrak{b}_q(u,x)=0$, i.e.,
$u\in\k(\tau)^\perp=\r(\tau)$.
The equality $W\cap \r(\tau)=\{0\}$ then implies that $u=0$, i.e., $W$ is regular.
By \cite[(7.22)]{elman} we have $V=W\perp W^\perp$, hence $\tau=\tau|_W\perp\tau|_{W^\perp}=\id_W\perp\tau|_{W^\perp}$.
\\
(2) Since $\dim_FW^\perp=\dim_FV-\dim_FW$, we get
\[\dim_FW^\perp=\dim_FV-(\dim_F\k(\tau)-\dim_F\r(\tau))=2\dim_F\r(\tau).\]
(3) Set $\tau'=\tau|_{W^\perp}$.
Then $\r(\tau)=\r(\id_W)\perp \r(\tau')=\r(\tau')$.
Using the second part, the relation $\dim_F\r(\tau')+\dim_F\k(\tau')=\dim_FW^\perp$
 yields $\dim_F\k(\tau')\penalty 0=\dim_F\r(\tau')$.
This, together with (\ref{tauid}) implies that $\k(\tau')=\r(\tau')$.
\end{proof}

\begin{defi}(\cite{wiitala})
Let $(V,q)$ be a $4$-dimensional quadratic space over a field $F$.
An isometry $\tau\in O(V,q)$ is called an {\it interchange isometry} if $\k(\tau)$ is a $2$-dimensional totally isotropic subspace of $(V,q)$.
\end{defi}

\begin{rems}\label{td}
(1) If $\tau$ is an interchange isometry, then $(V,q)$ is hyperbolic by \cite[(7.28)]{elman}.
 Also, dimension count implies that
$\k(\tau)=\k(\tau)^\perp=\r(\tau)$.
In particular, $(\tau-\id)^2=0$ by (\ref{tauid}).

\noindent (2) If $\car F\neq2$, then (\ref{tauid}) shows that for a $4$-dimensional isometry $\tau$ the following conditions are equivalent:
(i) $\tau$ is an interchange isometry.
(ii) $(\tau-\id)^2=0$ and $\tau\neq\id$. (iii) $\k(\tau)^\perp=\k(\tau)$.
\end{rems}

\begin{prop}\label{defint}
Let $(V,q)$ be a $4$-dimensional quadratic space over a field $F$.
For an isometry $\tau$ of $(V,q)$ the following conditions are equivalent:
\begin{itemize}
\item[$(1)$] $\tau$ is an interchange isometry.
\item[$(2)$] There exists a hyperbolic basis $(x,y,w,z)$ of $(V,q)$ such that
$(x,w)$ is a basis of $\k(\tau)$, $\tau(y)=y+w$ and $\tau(z)=z-x$.
\item[$(3)$] $(\tau-\id)^2=0$ and $\tau$ is indecomposable.
\end{itemize}
\end{prop}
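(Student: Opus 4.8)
The plan is to prove the four implications $(2)\Rightarrow(1)$, $(1)\Rightarrow(2)$, $(1)\Rightarrow(3)$ and $(3)\Rightarrow(1)$; the first two give $(1)\Leftrightarrow(2)$ and the last two give $(1)\Leftrightarrow(3)$. For $(2)\Rightarrow(1)$ there is essentially nothing to compute: $(2)$ exhibits $\k(\tau)=\langle x,w\rangle$ with $x,w$ linearly independent, and since $x$ and $w$ are isotropic vectors lying in different hyperbolic planes of the given basis they are mutually orthogonal, so $q(ax+cw)=a^2q(x)+c^2q(w)+ac\,\mathfrak{b}_q(x,w)=0$; thus $\k(\tau)$ is a $2$-dimensional totally isotropic subspace and $\tau$ is an interchange isometry. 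For $(1)\Rightarrow(2)$ I would invoke (\ref{td}): $(V,q)$ is hyperbolic and $\k(\tau)=\r(\tau)$ is a $2$-dimensional (hence maximal) totally isotropic subspace, so a basis $(x,w)$ of $\k(\tau)$ extends to a hyperbolic basis $(x,y,w,z)$ of $(V,q)$. Since $\tau(y)-y,\tau(z)-z\in\r(\tau)=\langle x,w\rangle$ we may write $\tau(y)=y+\alpha x+\beta w$ and $\tau(z)=z+\gamma x+\delta w$, and the relations $q(\tau(y))=q(y)$, $q(\tau(z))=q(z)$, $\mathfrak{b}_q(\tau(y),\tau(z))=\mathfrak{b}_q(y,z)$ coming from $\tau$ being an isometry force $\alpha=\delta=0$ and $\gamma=-\beta$; moreover $\beta\neq0$, since $\beta=0$ would give $\tau=\id$ and $\k(\tau)=V$. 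Replacing $(x,y)$ by $(\beta x,\beta^{-1}y)$ keeps the basis hyperbolic and $\k(\tau)$ unchanged and normalizes $\tau$ to $\tau(y)=y+w$, $\tau(z)=z-x$, which is $(2)$.

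For $(1)\Rightarrow(3)$, (\ref{td}) already gives $(\tau-\id)^2=0$, so only indecomposability is at issue. Suppose $W$ were a $\tau$-invariant regular subspace with $0\neq W\neq V$; then $W^\perp$ is $\tau$-invariant and regular, $V=W\perp W^\perp$, and $\tau=\tau|_W\perp\tau|_{W^\perp}$, whence $\r(\tau)=\r(\tau|_W)\perp\r(\tau|_{W^\perp})$ and the Wall form $\omega_\tau$ decomposes as the orthogonal sum of the (nondegenerate) Wall forms of $\tau|_W$ and $\tau|_{W^\perp}$ on these two summands. Since $\tau$ is an interchange isometry, $\r(\tau)=\k(\tau)$ is totally isotropic, so $\omega_\tau(u,u)=-q(u)=0$ on $\r(\tau)$; hence each summand carries a nonzero nondegenerate \emph{alternating} bilinear form and so has dimension $\geq2$, forcing $\dim_F\r(\tau)\geq4$ and contradicting $\dim_F\r(\tau)=\dim_F\k(\tau)=2$. (If one prefers to avoid the additivity of the Wall form under orthogonal sums, the same contradiction follows from (\ref{tauid}) and (\ref{v'}): these first force $\dim_FW=\dim_FW^\perp=2$, and then a short computation on the hyperbolic plane $W$ shows $\tau|_W=\id$, so the regular subspace $W$ is totally isotropic, which is absurd.)

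For $(3)\Rightarrow(1)$: first $\tau\neq\id$, because the regularity of $V$ forces $q$ to be not totally singular, so an anisotropic line $Fu$ exists, and it would be a $\tau$-invariant regular proper subspace if $\tau$ were the identity. By (\ref{tauid}), $\r(\tau)\subseteq\k(\tau)$; let $W$ be a complement of $\r(\tau)$ in $\k(\tau)$. By (\ref{v'}), $W$ is regular and $\tau=\id_W\perp\tau|_{W^\perp}$, so $W$ is $\tau$-invariant and regular; indecomposability forces $W=0$ (it cannot be $V$, for then $\tau$ would be the identity). Hence $\k(\tau)=\r(\tau)$ and $W^\perp=V$, so the second part of (\ref{v'}) gives $\dim_FV=2\dim_F\r(\tau)$, i.e.\ $\dim_F\k(\tau)=\dim_F\r(\tau)=2$. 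Finally $\k(\tau)$ is totally isotropic: a vector $u\in\k(\tau)$ with $q(u)\neq0$ would span a $\tau$-invariant regular proper subspace. Thus $\k(\tau)$ is $2$-dimensional and totally isotropic, i.e.\ $\tau$ is an interchange isometry.

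The step I expect to cost the most effort is the indecomposability half of $(1)\Leftrightarrow(3)$, above all in characteristic $2$: there (\ref{tauid}) only tells us that $q|_{\r(\tau)}$ is totally \emph{singular}, which is strictly weaker than totally \emph{isotropic}, so both the isotropy of $\k(\tau)$ in $(3)\Rightarrow(1)$ and the impossibility of a proper regular invariant summand in $(1)\Rightarrow(3)$ must genuinely be extracted from the indecomposability hypothesis — by peeling off anisotropic lines and, for $(1)\Rightarrow(3)$, by exploiting that the Wall form of an interchange isometry is alternating of dimension exactly $2$ and hence cannot be a nontrivial orthogonal sum of Wall forms.
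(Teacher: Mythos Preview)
Your argument for $(3)\Rightarrow(1)$ breaks in characteristic $2$. You twice claim that an anisotropic vector $u$ spans a $\tau$-invariant \emph{regular} line $Fu$: once to rule out $\tau=\id$, and once to force $\k(\tau)$ to be totally isotropic. But in characteristic $2$ the polar form $\mathfrak{b}_q$ is alternating, so $\mathfrak{b}_q(u,u)=0$ regardless of $q(u)$; hence $u\in(Fu)^\perp$ and a line is \emph{never} regular. Your closing remark about ``peeling off anisotropic lines'' shows you have spotted that characteristic $2$ is the delicate case, but lines are precisely the wrong tool there. The paper's fix is to pass to a plane: given $u\in\k(\tau)=\r(\tau)$ with $q(u)\neq0$, write $u=\tau(v)-v$; then $\mathfrak{b}_q(u,v)=\omega_\tau(u,u)=-q(u)\neq0$, so $Fu+Fv$ is a $\tau$-invariant regular proper subspace, contradicting indecomposability. (For $\tau\neq\id$ any $2$-dimensional regular subspace already suffices.)

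Apart from this, your proofs of $(1)\Rightarrow(2)$ and of indecomposability in $(1)\Rightarrow(3)$ are correct and more self-contained than the paper's, which quotes the Eichler-transformation description and \cite{mahmoudi} for the former and \cite{snap} together with a case analysis of $2$-dimensional isometries for the latter. Your Wall-form argument for indecomposability is characteristic-free; you should just add one sentence explaining why neither summand $\r(\tau|_W)$, $\r(\tau|_{W^\perp})$ can be zero (if $\tau|_W=\id$ then the nonzero regular space $W$ lies in the totally isotropic $\k(\tau)$, which is impossible).
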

\begin{proof}
$(1)\Rightarrow(2)$
By \cite[(3.5)]{mahmoudi}, it suffices to consider the case where $\car F\neq 2$.
Since $\r(\tau)$ is singular and $\dim_F\r(\tau)=2$, $\tau$ is an Eichler transformation by \cite[(3.10)]{knup}.
Write $\tau=E_{x,w}$ for some $x,w\in\r(\tau)\penalty 0=\k(\tau)$ and extend $(x,w)$ to a hyperbolic basis $(x,y,w,z)$ of $(V,q)$.
Then $\tau(y)=E_{x,w}(y)=y+w$ and $\tau(z)=E_{x,w}(z)=z-x$,
hence $(x,y,w,z)$ is the desired basis.

$(2)\Rightarrow(3)$
Since $\r(\tau)=Fx+Fw$ is totally isotropic, the equality $(\tau-\id)^2=0$ follows from (\ref{tauid}).
If $\car F\neq2$, then \cite[(247.1)]{snap} shows that $\tau$ is indecomposable.
Suppose that $\car F=2$.
Then every proper regular subspace $V_1$ of $V$ is $2$-dimensional over $F$.
If in addition $\tau(V_1)=V_1$, then $\tau|_{V_1}$ is either the identity on $V_1$ or a reflection (see \cite[Lem. 1]{wiitala}).
In both cases $\k(\tau|_{V_1})$ is regular, contradicting the fact that $\k(\tau|_{V_1})\subseteq\k(\tau)=Fx+Fw$ is totally isotropic.

$(3)\Rightarrow(1)$
Observe first that $\r(\tau)=\k(\tau)$, since otherwise by (\ref{v'})
every complement of $\r(\tau)$ in $\k(\tau)$ would be regular and $\tau$-invariant, contradicting the hypothesis.
Since $\dim_F\r(\tau)+\dim_F\k(\tau)=\dim_FV=4$, we obtain $\dim_F\k(\tau)=2$.
If $\car F\neq2$, then $\k(\tau)$ is totally isotropic by (\ref{tauid}), hence $\tau$ is an interchange isometry.
Suppose that $\car F=2$ and let $u$ be an arbitrary vector in $\k(\tau)=\r(\tau)$.
We claim that $q(u)=0$.
Choose $v\in V$ such that $u=\tau(v)-v$.
Then $Fu+Fv$ is a $\tau$-invariant subspace of $V$.
If $q(u)\neq0$, then $\mathfrak{b}_q(u,v)=\omega_\tau(u,u)=-q(u)\neq0$, i.e., $Fu+Fv$ is regular, a contradiction.
Thus, $q(u)=0$ and $\tau$ is an interchange isometry.
\end{proof}

\begin{rem}\label{g}
Let $(V,q)$ be a $4$-dimensional quadratic space over a field $F$ and let $\tau\in O(V,q)$ be an interchange isometry.
If $(x,y,w,z)$ is a basis of $V$ with the properties stated in (\ref{defint}), then a direct computation shows that $\tau$
coincides with the restriction to $V$ of the inner automorphism of $C(q)$ induced by $g:=1+wx\in C(q)$.
Furthermore, if $\car F=2$, the element $g$ can bo chosen more naturally as follows:
by \cite[(3.6)]{mahmoudi}, there exist $2$-dimensional regular subspaces $V_1$ and $V_2$ of $V$ such that $V=V_1\perp V_2$, $\tau(V_1)=V_2$ and $\tau(V_2)=V_1$.
 Let $(u_1,v_1)$ be any basis of $V_1$ with $\mathfrak{b}_q(u_1,v_1)=1$.
Then the inner automorphism of $C(q)$ induced by
 \[g=1+(u_1+\tau(u_1))(v_1+\tau(v_1))\in C(q),\]
restricts to $\tau$ on $V$.
The element $g$ corresponds to the {\it Goldman element} of $C(q|_{V_1})\otimes C(q|_{V_1})$ (see \cite[p. 35 and Exercise 8, p. 63]{knus}).
\end{rem}

\begin{rems}\label{rem}
Let $(V,q)$ be a quadratic space over a field $F$ and let $\tau$ be an isometry of $(V,q)$ with $(\tau-\id)^2=0$.
Consider the Wall form $\omega_\tau$.

\noindent(1) If $\car F\neq2$, then $\omega_\tau$ is an antisymmetric bilinear form, hence it is alternating.
By \cite[(1.8)]{elman}, $\omega_\tau$ is isometric to $n\mathbb{H}_{-1}$ for some $n$.

\noindent(2) If $\car F=2$, then $\tau$ is an involution in $O(V,q)$.
 Thus, $\omega_\tau$ is symmetric, which is either nonalternating or isometric to $n\mathbb{H}_{-1}$ for some $n$ (see \cite[(1.8)]{elman}).
\end{rems}

\begin{lem}\label{xinfix}
Let $(V,q)$ be a quadratic space over a field $F$ of characteristic $2$ and let $\tau\in O(V,q)$.
For a vector $u\in V$ the following conditions are equivalent:
\begin{itemize}
\item[$(1)$] There exists a regular subplane $V'$ of $V$ containing $u$ such that $\tau|_{V'}=\tau_u$.
\item[$(2)$] $u\in \r(\tau)$ and $\omega_\tau(u,u)\neq0$.
\end{itemize}
\end{lem}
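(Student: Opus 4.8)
The plan is to prove the two implications separately; in each direction the only ingredients are the explicit form of an orthogonal reflection on a regular plane and the identity $\omega_\tau(w,w)=-q(w)$ valid for $w\in\r(\tau)$.

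For $(1)\Rightarrow(2)$, suppose $V'$ is a regular plane with $u\in V'$ and $\tau|_{V'}=\tau_u$; this already forces $q(u)\neq0$, since the reflection $\tau_u$ is defined only for anisotropic $u$. The first step is to identify the residual space of $\tau_u$ inside $V'$: for every $x\in V'$ one has $\tau_u(x)-x=-\frac{\mathfrak{b}_q(u,x)}{q(u)}u\in Fu$, and since $V'$ is a $2$-dimensional regular space the polar form $\mathfrak{b}_q|_{V'}$ is nondegenerate (and alternating), so $\mathfrak{b}_q(u,\cdot)$ does not vanish identically on $V'$; hence $\r(\tau|_{V'})=\r(\tau_u|_{V'})=Fu$. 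As $\r(\tau|_{V'})\subseteq\r(\tau)$, this gives $u\in\r(\tau)$, and then $\omega_\tau(u,u)=-q(u)\neq0$, which is $(2)$.

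For $(2)\Rightarrow(1)$, write $u=\tau(v)-v$ with $v\in V$. The key point is that $\tau$ fixes $u$: since $\tau$ is an involution (in characteristic $2$ the isometries with $(\tau-\id)^2=0$ relevant to this section are exactly the involutions, and for them $\r(\tau)\subseteq\k(\tau)$ by (\ref{tauid})), one has $\tau(u)=\tau^2(v)-\tau(v)=v-\tau(v)=u$, the last equality because $\car F=2$. Next, $u$ and $v$ are linearly independent, since $v\in Fu$ would give $\tau(v)=v$, hence $u=0$, contradicting $q(u)=-\omega_\tau(u,u)\neq0$. Put $V':=Fu+Fv$. This is a plane; it is $\tau$-invariant because $\tau(u)=u$ and $\tau(v)=v+u$ both lie in it; and with respect to the basis $(u,v)$ its polar form has Gram matrix $\bigl(\begin{smallmatrix}0&q(u)\\ q(u)&0\end{smallmatrix}\bigr)$ — here one uses $\mathfrak{b}_q(u,v)=\omega_\tau(u,u)=-q(u)$ — which is nondegenerate, so $V'$ is regular. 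Finally $\tau|_{V'}=\tau_u$, since both maps fix $u$ (as $\mathfrak{b}_q(u,u)=0$) and $\tau_u(v)=v-\frac{\mathfrak{b}_q(u,v)}{q(u)}u=v+u=\tau(v)$. This establishes $(1)$.

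The determinations of $\r(\tau_u|_{V'})$, of the Gram matrix of $V'$, and of the action of $\tau_u$ on $(u,v)$ are all routine. The one substantive step is the assertion $\tau(u)=u$ in $(2)\Rightarrow(1)$: for an arbitrary isometry an anisotropic vector of $\r(\tau)$ need not be fixed — on a hyperbolic plane with quadratic form $q(x,y)=xy$ the isometry $(x,y)\mapsto(cx,c^{-1}y)$ with $c\neq1$ has residual space equal to the whole plane yet no nonzero fixed vector — so the argument genuinely uses that $\tau$ is an involution, equivalently that $\r(\tau)\subseteq\k(\tau)$; granting this, the remaining verifications are pure bookkeeping.
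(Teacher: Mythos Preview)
Your argument follows essentially the same route as the paper's: in both directions the same construction $V'=Fu+Fv$ with $u=\tau(v)-v$ is used, and regularity is checked via $\mathfrak{b}_q(u,v)=\omega_\tau(u,u)=-q(u)\neq0$.

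You have, however, spotted a genuine issue that the paper glosses over. In the $(2)\Rightarrow(1)$ direction the paper simply writes ``Since $\tau(u)=u$'' without justification, and you correctly observe that this step requires $\r(\tau)\subseteq\k(\tau)$, i.e.\ $(\tau-\id)^2=0$ (equivalently, in characteristic~$2$, that $\tau$ is an involution). Your counterexample on a hyperbolic plane shows that for an arbitrary $\tau\in O(V,q)$ the implication $(2)\Rightarrow(1)$ is in fact false, so the lemma as stated is missing this hypothesis. Since the lemma is only invoked in Theorem~\ref{char}, where $(\tau-\id)^2=0$ is assumed, both proofs are valid for the paper's purposes once the hypothesis is made explicit; your write-up is simply more careful on this point.
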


\begin{proof}
$(1)\Rightarrow(2)$
Choose a vector $v\in V'$ such that $\mathfrak{b}_q(u,v)=-q(u)\in F^\times$.
Then $\tau(v)-v=u$, hence $u\in\r(\tau)$.
We also have $\omega_\tau(u,u)=-q(u)\neq0$.

 $(2)\Rightarrow(1)$ Write $u=\tau(v)-v$ for some $v\in V$ and set $V'=Fu+Fv$.
Then $\mathfrak{b}_q(u,v)=\omega_\tau(u,u)=-q(u)\neq0$, hence
$V'$ is regular.
Since $\tau(u)=u$ and $\tau(v)=v+u=v-\frac{\mathfrak{b}_q(u,v)}{q(u)}u$, we have $\tau|_{V'}=\tau_u$.
\end{proof}

\begin{lem}\label{int}
Let $(V,q)$ be a quadratic space over a field $F$ and let $\tau$ be an isometry of $(V,q)$ with $(\tau-\id)^2=0$.
For a $2$-dimensional subspace $U$ of $V$ the following conditions are equivalent:
\begin{itemize}
\item[$(1)$] There exists a $4$-dimensional regular subspace $V'$ of $V$ such that $\tau|_{V'}$ is an interchange isometry with $\r(\tau|_{V'})=U$.
\item[$(2)$] $U\subseteq\r(\tau)$ and $\omega_\tau|_U\simeq\mathbb{H}_{-1}$.
\end{itemize}
\end{lem}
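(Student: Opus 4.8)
The plan is to prove the two implications separately. The three tools I would keep at hand throughout are the identity $\omega_\tau(u,u)=-q(u)$ for $u\in\r(\tau)$, the chain of inclusions $U\subseteq\r(\tau)\subseteq\k(\tau)=\r(\tau)^\perp$ supplied by Lemma~\ref{tauid}, and Proposition~\ref{defint}, which is what turns an explicit hyperbolic basis into the assertion that a $4$-dimensional restriction is an interchange isometry.

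For $(1)\Rightarrow(2)$: given such a $V'$, first note that $\tau|_{V'}(v)-v=\tau(v)-v$ for $v\in V'$, so $U=\r(\tau|_{V'})\subseteq\r(\tau)$. Next, $\omega_\tau$ restricted to $U$ is exactly the Wall form of $\tau|_{V'}$: for $u,u'\in U$ choose preimages $v,v'\in V'$, and then $\omega_{\tau|_{V'}}(u,u')=\mathfrak{b}_q(\tau(v)-v,v')=\omega_\tau(u,u')$, the point being that $\omega_\tau$ does not depend on the chosen preimages. Since $\tau|_{V'}$ is an interchange isometry, Remark~\ref{td}(1) gives $\r(\tau|_{V'})=\k(\tau|_{V'})$, a $2$-dimensional totally isotropic subspace; hence $q$ vanishes on $U$. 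Therefore $\omega_\tau|_U$ is nondegenerate on a $2$-dimensional space and satisfies $\omega_\tau(u,u)=-q(u)=0$ for all $u\in U$, i.e. it is alternating, and a nondegenerate alternating bilinear form on a $2$-dimensional space is isometric to $\mathbb{H}_{-1}$.

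For $(2)\Rightarrow(1)$: I would construct the hyperbolic basis of Proposition~\ref{defint}(2) by hand. Pick a basis $(x,w)$ of $U$ realizing $\omega_\tau|_U\simeq\mathbb{H}_{-1}$, so $\omega_\tau(x,x)=\omega_\tau(w,w)=0$ and $\omega_\tau(x,w)=1$; then $q(x)=q(w)=0$, while $\mathfrak{b}_q(x,w)=0$ because $x,w\in\r(\tau)\subseteq\k(\tau)=\r(\tau)^\perp$. Choose $a,b,c\in V$ with $\tau(a)-a=x$, $\tau(b)-b=w$, $\tau(c)-c=-x$; unwinding the definition of $\omega_\tau$ (with these preimages) gives $\mathfrak{b}_q(x,b)=1$, $\mathfrak{b}_q(w,b)=0$, $\mathfrak{b}_q(x,c)=0$, $\mathfrak{b}_q(w,c)=1$. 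Since $x$ and $w$ are fixed by $\tau$, replacing $b$ by $y:=b-q(b)x$ and $c$ by $z:=c-\mathfrak{b}_q(y,c)x-q(c)w$ leaves $\tau(b)-b$ and $\tau(c)-c$ unchanged while forcing $q(y)=q(z)=0$ and $\mathfrak{b}_q(y,z)=0$, and preserving $\mathfrak{b}_q(x,y)=1$, $\mathfrak{b}_q(w,y)=0$, $\mathfrak{b}_q(x,z)=0$, $\mathfrak{b}_q(w,z)=1$. Thus $(x,y,w,z)$ is a hyperbolic basis of the $\tau$-invariant subspace $V':=Fx+Fy+Fw+Fz$, on which $\tau$ acts by $\tau(x)=x$, $\tau(y)=y+w$, $\tau(w)=w$, $\tau(z)=z-x$; by Proposition~\ref{defint}, $\tau|_{V'}$ is an interchange isometry, and $\r(\tau|_{V'})=Fx+Fw=U$.

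The step that needs genuine care is the last one of the construction: one must check that the corrections to $b$ and $c$ can be arranged to satisfy all of the hyperbolic-basis relations at once without conflict, and that $\{x,y,w,z\}$ is linearly independent so that $V'$ is indeed $4$-dimensional and regular. Both work out because the vanishing of $q(x)$, $q(w)$ and $\mathfrak{b}_q(x,w)$ decouples the relevant linear conditions, and the Gram matrix of $(x,y,w,z)$ with respect to $\mathfrak{b}_q$ is visibly nonsingular; I regard these as routine verifications rather than a real obstacle, so the only conceptual input is recognizing that Proposition~\ref{defint} does the final packaging.
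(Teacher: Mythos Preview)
Your argument is correct. For $(1)\Rightarrow(2)$ you and the paper do the same thing; the paper just reads the Wall-form values off the explicit hyperbolic basis of Proposition~\ref{defint}, while you identify $\omega_\tau|_U$ with the Wall form of $\tau|_{V'}$ and invoke that Wall forms are nondegenerate---equivalent bookkeeping.

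For $(2)\Rightarrow(1)$ there is a small but genuine difference in execution. The paper also chooses preimages $y,z$ with $w=\tau(y)-y$ and $x=z-\tau(z)$ and sets $V'=Fx+Fy+Fw+Fz$, but it does \emph{not} adjust $y,z$ into a hyperbolic basis. Instead it shows directly that $\k(\tau|_{V'})=U$ is $2$-dimensional totally isotropic and then proves regularity of $V'$ by a short Wall-form trick: any $u\in V'\cap V'^\perp$ has $\tau(u)-u\in U\cap V'^\perp$, and pairing with $y$ and $z$ via $\omega_\tau$ forces $\tau(u)-u=0$, whence $u\in U$, and repeating kills $u$. Your route is more constructive---you actually produce the hyperbolic basis of Proposition~\ref{defint}(2) by the corrections $y=b-q(b)x$ and $z=c-\mathfrak{b}_q(y,c)x-q(c)w$, after which regularity and $4$-dimensionality are read off the Gram matrix. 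The paper's argument is slicker and avoids the coefficient chase; yours has the virtue of exhibiting the hyperbolic basis explicitly, which is exactly what Theorem~\ref{char} wants downstream. (Minor note: your preimage $a$ with $\tau(a)-a=x$ is never used; the Wall-form identities you need only require that $x,w\in\r(\tau)$.)
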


\begin{proof}
$(1)\Rightarrow(2)$
Choose a hyperbolic basis $(x,y,w,z)$ of $V'$ with the properties stated in (\ref{defint}).
Then $U=Fx+Fw$, $\omega_\tau(w,w)=-q(w)=0$ and $\omega_\tau(x,x)=-q(x)=0$.
Also, the equalities $w=\tau(y)-y$ and $x=z-\tau(z)$ imply that $\omega_\tau(x,w)=\mathfrak{b}_q(x,y)=1$
and $\omega_\tau(w,x)=\mathfrak{b}_q(w,-z)=-1$, hence $\omega_\tau|_U\simeq\mathbb{H}_{-1}$.

$(2)\Rightarrow(1)$
Let $(x,w)$ be a hyperbolic basis of $U\subseteq\r(\tau)$.
Write $w=\tau(y)-y$ and $x=z-\tau(z)$ for some $y,z\in V$.
Set $V':=Fx+Fy+Fw+Fz$.
The relation $(\tau-\id)^2=0$ implies that $\tau(x)=x$ and $\tau(w)=w$, hence $V'$ is a $\tau$-invariant subspace of $V$ and
 $\k(\tau|_{V'})=\r(\tau|_{V'})\penalty 0 =U$.
By (\ref{tauid}) we have $\mathfrak{b}_q(x,w)=0$.
Also, $q(w)=-\omega_\tau(w,w)=0$ and $q(x)=-\omega_\tau(x,x)=0$, so $\k(\tau|_{V'})$ is a $2$-dimensional totally isotropic subspace of $V'$.
It remains to show that $V'$ is regular.
Let $u\in V'\cap {V'}^\perp$.
Since $\tau(V')=V'$ one has $\tau(u)-u\in V'\cap {V'}^\perp$.
On the other hand we have $\tau(u)-u\in\r(\tau|_{V'})=Fx+Fw$, hence there exists $a,b\in F$ such that $\tau(u)-u=ax+aw$.
Then
\[a=\omega_\tau(ax+bw,w)=\omega_\tau(\tau(u)-u,w)=\mathfrak{b}_q(\tau(u)-u,y)=0.\]
Similarly, we have $b=0$.
Thus, $\tau(u)=u$, i.e., $u\in\k(\tau|_{V'})=U=Fx+Fw$.
Repeating the above argument we obtain $u=0$, hence $V'$ is regular.
\end{proof}

\begin{thm}\label{char}
Let $(V,q)$ be a quadratic space over a field $F$ and let $\tau\in O(V,q)$ with $(\tau-\id)^2=0$.
Let $W$ be a complement of $\r(\tau)$ in $\k(\tau)$.
Then there exists a decomposition $V=W\perp V_1\perp\cdots\perp V_m$ into $\tau$-invariant regular subspaces, where $\tau|_W=\id$ and the integer $m$ and the subspaces $V_i$ can be described as follows:
\begin{itemize}
\item[$(1)$] If $\omega_\tau$ is alternating, then $m=\frac{1}{2}\dim_F\r(\tau)$, $\dim_FV_i=4$ and $\tau|_{V_i}$ is an interchange isometry for every $i$.
    More precisely, if $(x_1,w_1,\cdots,x_m,w_m)$ is a hyperbolic basis of $(\r(\tau),\omega_\tau)\simeq m\mathbb{H}_{-1}$, then
    the subspace $V_i$, $i=1,\cdots,m$, can be chosen such that $\r(\tau|_{V_i})=Fx_i+Fw_i$.
    \item[$(2)$] If $\omega_\tau$ is nonalternating, then $m=\dim_F\r(\tau)$, $\dim_FV_i=2$ and $\tau|_{V_i}$ is a reflection for every $i$.
More precisely, if $(u_1,\cdots,u_m)$ is an orthogonal basis of $(\r(\tau),\omega_\tau)$, then
the subspace $V_i$, $i=1,\cdots,m$, can be chosen such that $\tau|_{V_i}=\tau_{u_i}$.
\end{itemize}
\end{thm}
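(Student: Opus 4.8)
The plan is to reduce to the case $W=0$, then split off one summand at a time by induction on $\dim_F\r(\tau)$, using Lemma~\ref{int} (resp. Lemma~\ref{xinfix}) to produce each $V_i$ with the prescribed residual space and a short computation to identify the Wall form of what remains.

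First I would reduce. By Lemma~\ref{v'} the subspace $W$ is regular, $\tau=\id_W\perp\tau|_{W^\perp}$, $\r(\tau)\subseteq W^\perp$, and $\k(\tau|_{W^\perp})=\r(\tau|_{W^\perp})=\r(\tau)$. Since $\r(\tau)\perp\k(\tau)$, the quantity $\mathfrak{b}_q(\tau(y)-y,z)$ depends only on $\tau(y)-y$ and $\tau(z)-z$, so $\omega_{\tau|_{W^\perp}}$ and $\omega_\tau$ are the same form on $\r(\tau)$. Thus it suffices to prove the statement for $\tau|_{W^\perp}$, and we may assume from now on that $\r(\tau)=\k(\tau)$, so that $\dim_F V=2\dim_F\r(\tau)$. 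By Remark~\ref{rem}, if $\car F\neq2$ then $\omega_\tau$ is alternating; hence in case~(2) necessarily $\car F=2$ and $\omega_\tau$ is symmetric and nonalternating, while in case~(1) we have $(\r(\tau),\omega_\tau)\simeq m\mathbb{H}_{-1}$ with $m=\tfrac12\dim_F\r(\tau)$.

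Now I would induct on $\dim_F\r(\tau)$, the case $\r(\tau)=0$ (so $\tau=\id$, $m=0$) being trivial. In case~(1), fix a hyperbolic basis $(x_1,w_1,\dots,x_m,w_m)$ of $(\r(\tau),\omega_\tau)$ and set $U=Fx_1+Fw_1$; then $\omega_\tau|_U\simeq\mathbb{H}_{-1}$, so Lemma~\ref{int} supplies a $4$-dimensional regular $\tau$-invariant subspace $V_1$ with $\tau|_{V_1}$ an interchange isometry and $\r(\tau|_{V_1})=U$. In case~(2), fix an orthogonal basis $(u_1,\dots,u_m)$ of the nondegenerate form $(\r(\tau),\omega_\tau)$, so that $\omega_\tau(u_1,u_1)\neq0$; Lemma~\ref{xinfix} then supplies a regular $\tau$-invariant subplane $V_1\ni u_1$ with $\tau|_{V_1}=\tau_{u_1}$ and $\r(\tau|_{V_1})=Fu_1$. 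In either case $V=V_1\perp V_1^\perp$, $\tau(V_1^\perp)=V_1^\perp$, and I set $\tau'=\tau|_{V_1^\perp}$.

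The crux is to check that $\tau'$ is again of the same shape, and here the only real difficulty is bookkeeping. One has $\r(\tau')=\r(\tau)\cap V_1^\perp$ and $\omega_{\tau'}=\omega_\tau|_{\r(\tau')}$; moreover, for $u\in\r(\tau|_{V_1})\subseteq V_1$ and $u'=\tau(y')-y'$ with $y'\in V_1^\perp$ we get $\omega_\tau(u,u')=\mathfrak{b}_q(u,y')=0$, and symmetrically $\omega_\tau(u',u)=0$, so $\r(\tau')$ lies in the orthogonal complement of $\r(\tau|_{V_1})$ inside $(\r(\tau),\omega_\tau)$. Since $\omega_\tau$ restricted to $\r(\tau|_{V_1})$ is nondegenerate, equality holds by a dimension count; hence $\k(\tau')=\r(\tau')$, $(\tau'-\id)^2=0$, and $(\r(\tau'),\omega_{\tau'})$ is nondegenerate of the same type as $\omega_\tau$, carrying $(x_2,w_2,\dots,x_m,w_m)$ (resp. $(u_2,\dots,u_m)$) as a hyperbolic (resp. orthogonal) basis. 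Applying the induction hypothesis to $\tau'$ on $V_1^\perp$ yields $V_1^\perp=V_2\perp\cdots\perp V_m$ with the stated properties, whence $V=W\perp V_1\perp\cdots\perp V_m$. All the geometric content sits in Lemmas~\ref{int} and~\ref{xinfix}; once the orthogonality computation above is in place, the induction closes without further effort.
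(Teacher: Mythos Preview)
Your proof is correct and follows essentially the same approach as the paper: reduce to $\r(\tau)=\k(\tau)$ via Lemma~\ref{v'}, peel off $V_1$ using Lemma~\ref{int} (resp.\ Lemma~\ref{xinfix}), show via the identity $\mathfrak{b}_q(u,y)=\omega_\tau(u,\tau(y)-y)$ that the remaining basis vectors of $(\r(\tau),\omega_\tau)$ lie in $V_1^\perp$, and induct. Your writeup is a bit more explicit about the bookkeeping (identifying $\r(\tau')$ with the $\omega_\tau$-orthogonal complement of $\r(\tau|_{V_1})$ and checking $\k(\tau')=\r(\tau')$), but the argument is the same.
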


\begin{proof}
Set $s=\dim_F\r(\tau)$, $U=W^\perp$, $q'=q|_{U}$ and $\tau'=\tau|_{U}$.
By (\ref{v'}), $\tau=\id_W\perp\tau'$, $\r(\tau')=\r(\tau)$ and $\dim_FU=2s$.
We also have $(\tau'-\id)^2=0$ and $\omega_{\tau'}=\omega_{\tau}$.

(1)
Set $m=\frac{s}{2}$.
Then $\dim_FU=4m$.
  By (\ref{int}) there exists a $4$-dimensional subspace $V_1\subseteq U$ such that $\tau'|_{V_1}$ is an interchange isometry and $\r(\tau'|_{V_1})=Fx_1+Fw_1$.
Let $U'$ be the orthogonal complement of $V_1$ in $U$ with respect to $q'$.
Then $\dim_FU'=4(m-1)$.
For $i=2,\cdots,m$ and $v\in V_1$, we have $\mathfrak{b}_q(x_i,v)=\omega_\tau(x_i,\tau(v)-v)=0$, since $\tau(v)-v\in\r(\tau'|_{V' })=Fx_1+Fw_1$.
Similarly, we have $\mathfrak{b}_q(w_i,v)=0$, hence $x_i,w_i\in U'$.
It follows that $(x_2,w_2,\cdots,x_m,w_m)$ is a hyperbolic basis of $\omega_{\tau'|_{U'}}\simeq(m-1)\mathbb{H}_{-1}$.
By induction and dimension count one can write
\[U=V_1\perp\cdots\perp V_m,\]
 where every $V_i$ is $4$-dimensional subspaces of $U$ and $\tau'|_{V_i}$ is an interchange isometry with $\r(\tau'|_{V_1})=Fx_1+Fw_1$.
Thus, $V=W\perp U=W\perp V_1\perp\cdots\perp V_m$ is the desired decomposition.

(2) By (\ref{rem} (2)) we have $\car F=2$.
Set $m=s$.
In view of (\ref{xinfix}), there exists a $2$-dimensional subspace $V_1\subseteq U$ containing $U$ such that $\tau'|_{V_1}=\tau_{u_1}$.
Let $U'$ be the orthogonal complement of $V_1$ in $U$.
For $i=2,\cdots,n$, we have $u_i\in U'$, since $\mathfrak{b}_q(u_i,v_1)=\omega_\tau(u_i,u_1)=0$.
Thus, $(u_2,\cdots,u_n)$ is an orthogonal basis of $\omega_{\tau'|_{U'}}$.
The result again follows from induction and dimension count.
\end{proof}

\begin{rem}\label{wit}
If $\car F=2$, (\ref{char}) gives an alternative proof of the Wiitala decomposition of involutions in orthogonal group (see \cite[Theorem. 1]{wiitala}).
Also, the subspace $W$ in (\ref{char}) is a Wiitala subspace of $(V,\tau)$ in the sense of \cite[(3.8)]{mahmoudi}.
Finally, $\tau$ is of {\it interchanging kind} if and only if $\omega_\tau$ is alternating (see \cite[(3.19)]{mahmoudi}).
\end{rem}

\section{The induced involution on Clifford algebra in characteristic two}\label{sec-clif}
Throughout this section, $F$ denotes a field of characteristic $2$.

Let $(V,q)$ be a quadratic space over $F$.
Every isometry $\tau$ of $(V,q)$ with $(\tau-\id)^2=0$ is an involution in the orthogonal group.
Such an involution induces an involution $J_\tau$ on $C(q)$ satisfying $J_\tau(v)=\tau(v)$ for $v\in V$.
The aim of this section is to study the relation between the Wall form $\omega_\tau$ and some invariants of $(C(q),J_\tau)$, introduced in \cite{dolphin3} and \cite{mahmoudi2}.
We first recall some constructions from \cite{dolphin3} and \cite{mahmoudi2}.

An algebra with involution $(A,\sigma)$ is called {\it totally decomposable} if it decomposes as tensor products of {\it quaternion algebras} (i.e., central simple algebras of degree $2$) with involution.
Let $(A,\sigma)$ be a totally decomposable algebra of degree $2^n$ with orthogonal involution over $F$.
As proved in \cite[pp. 10-11]{mahmoudi2}, there exists a unique, up to isomorphism, subalgebra $S\subseteq \sym(A,\sigma)$ such that (i) $x^2\in F$ for $x\in S$;
(ii) $\dim_F S=\deg_FA$; (iii) $S$ is self-centralizing, i.e., $C_A(S)=S$; and (iv) $S$ is~generated as an $F$-algebra
by exactly $n$ elements.
The isomorphism class of $S$ is denoted by $\Phi(A,\sigma)$.
Furthermore, the algebra $\Phi(A,\sigma)$ has a {\it set of alternating generators}, i.e., a set $\{u_1,\cdots,u_n\}\subseteq\alt(A,\sigma)$
consisting of pairwise commutative square-central units satisfying
$u_{i_1}\cdots u_{i_l}\in\alt(A,\sigma)$ for every $1\leq l\leq n$ and $1\leq i_1<\cdots<i_l\leq n$.
If $\{u_1,\cdots,u_n\}$ is a set of alternating generators of $\Phi(A,\sigma)$, then $\Phi(A,\sigma)\simeq F[u_1,\cdots,u_n]$.

Let $(A,\sigma)\simeq\bigotimes_{i=1}^n(Q_i,\sigma_i)$ be a decomposition of $A$ into quaternion algebras with involution and let $\alpha_i\in F^\times$, $i=1,\cdots,n$, be a representative of the class $\disc\sigma_i\in F^\times/F^{\times2}$.
The form $\lla\alpha_1,\cdots,\alpha_n\rra$ is called the {\it Pfister invariant} of $(A,\sigma)$ and is denoted by $\mathfrak{Pf}(A,\sigma)$.
By \cite[(7.5)]{dolphin3}, $\mathfrak{Pf}(A,\sigma)$ is independent of the decomposition of $(A,\sigma)$.
Finally, if $\{u_1,\cdots,u_n\}$ is a set of alternating generators  of $\Phi(A,\sigma)$ with $u_i^2=\alpha_i\in F^\times$, $i=1,\cdots,n$, then $\mathfrak{Pf}(A,\sigma)\simeq\lla\alpha_1,\cdots,\alpha_n\rra$ (see \cite[pp. 11-12]{mahmoudi2}).

Our first result is the following restatement of \cite[(4.7)]{mahmoudi}, in view of (\ref{wit}):
\begin{lem}\label{res}
Let $(V,q)$ be a quadratic space over $F$ and let $\tau\in O(V,q)$ be an involution.
The involution $J_\tau$ on $C(q)$ is orthogonal if and only if $\r(\tau)=\k(\tau)$.
\end{lem}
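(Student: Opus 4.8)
The statement is a reformulation of \cite[(4.7)]{mahmoudi}, so the plan is to make the dictionary precise and to indicate how it can also be recovered from \S\ref{sec-int}. First, since $\car F=2$ and $\tau^2=\id$ we have $(\tau-\id)^2=(\tau+\id)^2=\tau^2+\id=\id+\id=0$, so by (\ref{tauid}) the inclusion $\r(\tau)\subseteq\k(\tau)$ holds. Hence a complement $W$ of $\r(\tau)$ in $\k(\tau)$ exists, and $W=0$ precisely when $\r(\tau)=\k(\tau)$. By (\ref{v'}) one may write $V=W\perp W^\perp$ with $\tau=\id_W\perp\tau'$, where $\tau'=\tau|_{W^\perp}$ satisfies $\k(\tau')=\r(\tau')=\r(\tau)$ and $\dim_FW^\perp=2\dim_F\r(\tau)$; since $C(q)$ is central simple, $\dim_FV$ (and hence $\dim_FW$) is even. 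By (\ref{wit}) this $W$ is a Wiitala subspace of $(V,\tau)$, so \cite[(4.7)]{mahmoudi} states exactly that $J_\tau$ is orthogonal if and only if $W=0$, that is, if and only if $\r(\tau)=\k(\tau)$.

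For a proof internal to this paper, I would apply the Clifford functor to $V=W\perp W^\perp$. In characteristic two the sign $(-1)^{|x||y|}$ of the graded tensor product is always $1$, so $C(q)\simeq C(q|_W)\otimes C(q|_{W^\perp})$ as algebras, with $J_\tau$ corresponding to $\beta_W\otimes J_{\tau'}$ where $\beta_W$ is the canonical involution of $C(q|_W)$. If $W\neq0$, choose $v_1,v_2\in W$ with $\mathfrak{b}_q(v_1,v_2)=1$; then $v_1v_2+\beta_W(v_1v_2)=v_1v_2+v_2v_1=\mathfrak{b}_q(v_1,v_2)=1$, so $1\in\alt(C(q|_W),\beta_W)$ and $\beta_W$ is symplectic. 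Over a field of characteristic two the tensor product of a symplectic involution with any involution is again symplectic (from $1=a+\sigma_1(a)$ one obtains $1\otimes1=(a\otimes1)+(\sigma_1\otimes\sigma_2)(a\otimes1)$), so $J_\tau$ is symplectic, hence not orthogonal, whenever $\r(\tau)\neq\k(\tau)$. Conversely, if $\r(\tau)=\k(\tau)$ then $W=0$, and (\ref{char}) writes $\tau$ as an orthogonal sum $V=V_1\perp\cdots\perp V_m$ of interchange isometries or of $2$-dimensional reflections. In either case each $J_{\tau|_{V_i}}$ is orthogonal: a plane reflection $\tau_u$ is not the identity on $V_i$, so $J_{\tau_u}$ is distinct from the canonical involution, which is the unique symplectic involution of the quaternion algebra $C(q|_{V_i})$; and for an interchange isometry (\ref{g}) gives $J_{\tau|_{V_i}}=\intt(g)\circ\beta_i$ with $g=1+wx$ for a hyperbolic basis $(x,y,w,z)$ as in (\ref{defint}), and a short computation with the resulting basis of $C(q|_{V_i})$ shows that $g\notin\alt(C(q|_{V_i}),\beta_i)$. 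Since a tensor product of orthogonal involutions is orthogonal, $J_\tau$ is orthogonal.

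The main obstacle is the descent from the orthogonal decomposition of $(V,q,\tau)$ to the tensor decomposition of $(C(q),J_\tau)$, together with the characteristic-two rules for the type of a tensor product of involutions and the precise position of the Goldman element of (\ref{g}) relative to the alternating elements. This is exactly the material developed in \cite{mahmoudi} (and, for the Pfister invariant, in \cite{dolphin3} and \cite{mahmoudi2}), which is why in practice one simply invokes \cite[(4.7)]{mahmoudi} once $W$ has been identified with the Wiitala subspace via (\ref{wit}).
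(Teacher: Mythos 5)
Your first paragraph is exactly the paper's proof: the paper disposes of this lemma by identifying the complement $W$ of $\r(\tau)$ in $\k(\tau)$ with a Wiitala subspace via (\ref{wit}) and invoking \cite[(4.7)]{mahmoudi}, which is precisely your dictionary ``$J_\tau$ orthogonal $\Leftrightarrow W=0 \Leftrightarrow \r(\tau)=\k(\tau)$.'' Your supplementary ``internal'' argument is a genuinely different and sound route the paper does not take: the forward direction (if $W\neq0$ then $\beta_W$ is symplectic because $1=v_1v_2+v_2v_1\in\alt$, and symplectic tensor anything is symplectic in characteristic $2$) is complete as written, while the converse leans on two facts you assert but do not verify --- that $g=1+wx\notin\alt(C(q|_{V_i}),\beta_i)$ for an interchange factor (equivalently one can just quote (\ref{cint})-type input, i.e.\ \cite[(6.10)]{mahmoudi}), and that a tensor product of orthogonal involutions is orthogonal in characteristic $2$ (the nontrivial half of \cite[(2.23)]{knus}); both are true, so this only costs you a citation or a short computation, and in exchange it makes the lemma self-contained modulo \S\ref{sec-int} rather than resting on \cite[(4.7)]{mahmoudi}.
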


\begin{lem}\label{sp}
Let $(V,q)$ be a $2$-dimensional quadratic space over $F$ and let $\tau\in O(V,q)$ be a reflection.
Then $(C(q),J_\tau)\simeq(M_2(F),t)$ if and only if $\theta(\tau)$ is trivial.
\end{lem}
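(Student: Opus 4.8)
The plan is to compute the invariants $\Phi$ and $\mathfrak{Pf}$ of $(C(q),J_\tau)$ explicitly and compare them with those of $(M_2(F),t)$, invoking the classification of totally decomposable orthogonal involutions by their Pfister invariant recalled above. Write $\tau=\tau_u$ with $a:=q(u)\in F^\times$; since the spinor norm of a reflection along $u$ equals $q(u)$ modulo squares (see \cite[p.\ 137]{grove}), we have $\theta(\tau)=aF^{\times2}$, so the assertion amounts to: $(C(q),J_\tau)\simeq(M_2(F),t)$ if and only if $a\in F^{\times2}$. First I would note that, because $\car F=2$, one has $u\in(Fu)^\perp$, so in the two-dimensional space $V$ this gives $\k(\tau)=(Fu)^\perp=Fu=\r(\tau)$; by (\ref{res}) the involution $J_\tau$ is orthogonal, and $(C(q),J_\tau)$ is a quaternion algebra with orthogonal involution, hence totally decomposable of degree $2$.

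Next, extend $e_1:=u$ to a basis $(e_1,e_2)$ of $V$ with $\mathfrak{b}_q(e_1,e_2)=1$ and set $b=q(e_2)$, so that $C(q)$ is generated by $e_1,e_2$ subject to $e_1^2=a$, $e_2^2=b$, $e_1e_2+e_2e_1=1$. From $\tau(e_1)=e_1$ and $\tau(e_2)=e_2+a^{-1}e_1$ one computes $J_\tau(e_1e_2)=J_\tau(e_2)J_\tau(e_1)=(e_2+a^{-1}e_1)e_1=e_2e_1+1=e_1e_2$, so $J_\tau$ fixes $1,e_1,e_1e_2$ and sends $e_2\mapsto e_2+a^{-1}e_1$; hence $\alt(C(q),J_\tau)=Fe_1$. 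Since $e_1$ is a square-central unit with $e_1^2=a\in F^\times$ that generates the two-dimensional self-centralizing subalgebra $F[e_1]\subseteq\sym(C(q),J_\tau)$, a routine check of the defining properties identifies $F[e_1]$ with $\Phi(C(q),J_\tau)$, and as $e_1\in\alt(C(q),J_\tau)$ the singleton $\{e_1\}$ is a set of alternating generators, so $\mathfrak{Pf}(C(q),J_\tau)\simeq\lla a\rra$. On the other side, the symmetric matrix $w=\left(\begin{smallmatrix}0&1\\1&0\end{smallmatrix}\right)$ spans $\alt(M_2(F),t)$, satisfies $w^2=1$ and generates the self-centralizing subalgebra $F[w]=\Phi(M_2(F),t)$, whence $\mathfrak{Pf}(M_2(F),t)\simeq\lla1\rra$.

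Finally I would conclude in two steps. If $(C(q),J_\tau)\simeq(M_2(F),t)$, the Pfister invariants agree, so $\lla a\rra\simeq\lla1\rra$, forcing $a\in F^{\times2}$ and hence $\theta(\tau)$ trivial. Conversely, if $a=c^2$ with $c\in F^\times$, then $c^{-1}e_1+1$ is a nonzero element of $C(q)$ (nonzero since $e_1\notin F$) with $(c^{-1}e_1+1)^2=c^{-2}e_1^2+1=0$, so the quaternion algebra $C(q)$ contains a nonzero nilpotent and therefore splits, $C(q)\simeq M_2(F)$; then $J_\tau$ becomes an orthogonal (hence totally decomposable) involution on $M_2(F)$ with $\mathfrak{Pf}(M_2(F),J_\tau)\simeq\lla1\rra\simeq\mathfrak{Pf}(M_2(F),t)$, and (\ref{res}) together with \cite[(6.5)]{mahmoudi2} yields $(C(q),J_\tau)\simeq(M_2(F),t)$.

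I do not expect a genuine obstacle here: the argument is essentially one explicit matrix/Clifford-algebra computation plus the cited classification. The only points requiring care are the value $\theta(\tau_u)=q(u)F^{\times2}$ of the spinor norm of a reflection in characteristic $2$, and the verification that $F[e_1]$ really satisfies the defining properties of $\Phi(C(q),J_\tau)$ (self-centralizing, contained in $\sym$, generated by one square-central element). As a byproduct, the identity $\omega_\tau=\langle a\rangle$ shows $\mathfrak{Pf}(C(q),J_\tau)\simeq\langle1\rangle\perp\omega_\tau$, which is the geometric reformulation this section is after.
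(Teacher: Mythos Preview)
Your argument is correct. The paper, however, proceeds more directly: it invokes the identity $\disc J_\tau=\theta(\tau)$ from \cite[(4.11)]{mahmoudi}, so that $(C(q),J_\tau)\simeq(M_2(F),t)$ immediately forces $\disc J_\tau$ and hence $\theta(\tau)$ to be trivial; conversely, once $C(q)$ is split (by exactly the nilpotent trick you use), the triviality of the discriminant together with \cite[(7.4)]{knus} gives the isomorphism with $(M_2(F),t)$. You instead compute $\Phi(C(q),J_\tau)$ and $\mathfrak{Pf}(C(q),J_\tau)$ by hand and appeal to the Pfister-invariant classification \cite[(6.5)]{mahmoudi2}. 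In degree~$2$ the Pfister invariant $\lla a\rra$ carries the same information as the discriminant, so the two routes are equivalent in content; the paper's is shorter because the link between $\theta(\tau)$ and $\disc J_\tau$ is already available as a black box. What your approach buys is a concrete warm-up for the general identifications in (\ref{cliff}) and (\ref{clif}): your explicit description $\Phi(C(q),J_\tau)=F[u]\simeq C(q|_{\r(\tau)})$ and $\mathfrak{Pf}(C(q),J_\tau)\simeq\langle1\rangle\perp\omega_\tau$ is precisely the $n=1$ case of those results, as your closing remark indicates.
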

\begin{proof}
If $(C(q),J_\tau)\simeq(M_2(F),t)$, then $\disc J_\tau$ is trivial, hence $\theta(\tau)=1$ by \cite[(4.11)]{mahmoudi}.
Conversely, if $\theta(\tau)$ is trivial, then $q$ represents $1$.
Thus, there exists $x\in V$ such that $x^2=1\in C(q)$.
Then $(x+1)^2=0$, hence $C(q)$ splits.
Also, \cite[(4.11)]{mahmoudi} shows that $\disc\sigma=\theta(\tau)$ is trivial, hence $(C(q),J_\tau)\simeq(M_2(F),t)$ by \cite[(7.4)]{knus}.
\end{proof}

The next result follows from \cite[(6.10)]{mahmoudi} and (\ref{sp}).
\begin{cor}\label{cint}
Let $(V,q)$ be a $4$-dimensional quadratic space over $F$ and let $\tau$ be an interchange isometry of $(V,q)$.
Then $(C(q),J_\tau)\penalty 0 \simeq(M_4(F),t)$.
\end{cor}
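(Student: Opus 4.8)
The plan is to reduce, via \cite[(6.10)]{mahmoudi}, to the $2$-dimensional situation treated in (\ref{sp}), and then to invoke the compatibility of the transpose involution with tensor products, $(M_2(F),t)\otimes_F(M_2(F),t)\cong(M_4(F),t)$.

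First I would settle the underlying algebra and the type of the involution. Since $\tau$ is an interchange isometry, (\ref{td}) shows that $(V,q)$ is hyperbolic of dimension $4$, so that $C(q)\cong M_4(F)$ as an $F$-algebra, and that $\r(\tau)=\k(\tau)$, whence $J_\tau$ is an \emph{orthogonal} involution by (\ref{res}). It thus remains to prove that $J_\tau$ is conjugate to the transpose involution of $M_4(F)$.

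Next I would choose a convenient interchanging decomposition. By (\ref{g}) one may write $V=V_1\perp V_2$ with $V_1$ and $V_2=\tau(V_1)$ regular $2$-dimensional subspaces and $\tau|_{V_1}\colon V_1\iso V_2$ an isometry; using that $(V,q)$ is hyperbolic I would moreover arrange that $V_1$ (hence $V_2$) is a hyperbolic plane. Concretely: since $V\cong 2\mathbb{H}$ and $\dim_F\k(\tau)=2$, the isotropic vectors of $V$ do not all lie in $\k(\tau)$, so there is an isotropic $a\notin\k(\tau)$; because $\omega_\tau$ is nondegenerate on $\r(\tau)=\k(\tau)$ and $a\notin\k(\tau)=\k(\tau)^\perp$, one can pick $b\in V$ with $\mathfrak{b}_q(a,b)=\omega_\tau(\tau(a)-a,\tau(b)-b)\neq 0$; then $V_1:=Fa+Fb$ is regular, $V_1\perp\tau(V_1)$, and $q|_{V_1}$ is a regular isotropic binary form, i.e.\ a hyperbolic plane. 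As $q|_{V_1}$ represents $1$, there is a reflection $\tau_1=\tau_{u_1}$ of $(V_1,q|_{V_1})$ with $q(u_1)=1$, so $\theta(\tau_1)$ is trivial and $(C(q|_{V_1}),J_{\tau_1})\cong(M_2(F),t)$ by (\ref{sp}).

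Finally I would assemble the pieces: by \cite[(6.10)]{mahmoudi} the decomposition $V=V_1\perp V_2$ expresses $(C(q),J_\tau)$ as a tensor product over $F$ of two copies of a $2$-dimensional Clifford algebra with reflection-induced involution, identified through $\tau|_{V_1}$ with $(C(q|_{V_1}),J_{\tau_1})$; combining this with the previous step gives
\[(C(q),J_\tau)\cong(C(q|_{V_1}),J_{\tau_1})\otimes_F(C(q|_{V_1}),J_{\tau_1})\cong(M_2(F),t)\otimes_F(M_2(F),t)\cong(M_4(F),t).\]
The delicate point, and the one I expect to be the main obstacle, is the third paragraph together with the precise form of \cite[(6.10)]{mahmoudi}: one must produce the interchanging decomposition with a hyperbolic summand and verify that \cite[(6.10)]{mahmoudi} really does realize $J_\tau$ as the corresponding tensor-product involution for this choice — and this is exactly where hyperbolicity of $(V,q)$, guaranteed by $\tau$ being an interchange isometry, is genuinely used.
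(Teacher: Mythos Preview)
Your overall strategy coincides with the paper's: the corollary is deduced from \cite[(6.10)]{mahmoudi} together with (\ref{sp}), and the paper says nothing more. So at the level of approach you are on target.

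The elaboration in your third paragraph, however, contains genuine slips. First, the displayed identity $\mathfrak{b}_q(a,b)=\omega_\tau(\tau(a)-a,\tau(b)-b)$ is not the Wall form relation: by definition $\omega_\tau(\tau(a)-a,\tau(b)-b)=\mathfrak{b}_q(\tau(a)-a,b)$, and there is no reason this equals $\mathfrak{b}_q(a,b)$. Second, and more seriously, the claim ``$V_1\perp\tau(V_1)$'' is asserted but not argued, and in fact fails for your construction. With the hyperbolic basis $(x,y,w,z)$ of (\ref{defint}) and $a=y$, $b=x$, one gets $V_1=Fx+Fy$ and $\tau(V_1)=Fx+F(y+w)$, so $V_1\cap\tau(V_1)\ni x$ and the two subspaces are not even in direct sum, let alone orthogonal. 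Thus your explicit production of a hyperbolic interchanging summand does not go through as written.

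Fortunately this extra work is unnecessary: the existence of an interchanging decomposition $V=V_1\perp V_2$ with $\tau(V_1)=V_2$ is already provided by \cite[(3.6)]{mahmoudi} (as recalled in (\ref{g})), and the precise content of \cite[(6.10)]{mahmoudi} is what packages this, together with (\ref{sp}), into the desired isomorphism $(C(q),J_\tau)\simeq(M_4(F),t)$. You were right to flag this citation as the crux; rather than trying to rebuild a hyperbolic $V_1$ by hand, simply invoke it.
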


\begin{cor}\label{inter}
Let $(V,q)$ be a quadratic space over $F$ and let $\tau\in O(V,q)$ be an involution.
Let $W$ be a complement of $\r(\tau)$ in $\k(\tau)$.
If $\omega_\tau$ is alternating, then $(C(q),J_\tau)\simeq(C(q|_W),J_{\id})\otimes_F(M_{2^s}(F),t)$, where $s=\dim_F\r(\tau)$.
\end{cor}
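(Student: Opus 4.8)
The plan is to feed the orthogonal decomposition of (\ref{char}) into the Clifford functor and apply (\ref{cint}) block by block, exploiting the fact that over a field of characteristic $2$ the Clifford algebra of an orthogonal sum is an \emph{ordinary} tensor product of the Clifford algebras of the summands.

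First I would observe that, since $\car F=2$ and $\tau^2=\id$, one has $(\tau-\id)^2=2(\id-\tau)=0$, so that (\ref{char}) applies to $\tau$. As $\omega_\tau$ is alternating by hypothesis, part $(1)$ of that theorem, applied with the given complement $W$ of $\r(\tau)$ in $\k(\tau)$, provides an orthogonal decomposition $V=W\perp V_1\perp\cdots\perp V_m$ into $\tau$-invariant regular subspaces with $\tau|_W=\id$, $m=\frac{s}{2}$, $\dim_FV_i=4$ and $\tau|_{V_i}$ an interchange isometry for every $i$.

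Next I would record the elementary fact that for an orthogonal decomposition $(V,q)=(V_1,q_1)\perp(V_2,q_2)$ over $F$ there is a canonical $F$-algebra isomorphism $C(q)\simeq C(q_1)\otimes_F C(q_2)$: if $u\in V_1$ and $v\in V_2$ then $uv+vu=\mathfrak{b}_q(u,v)=0$ in $C(q)$, hence $uv=vu$ because $-1=1$; so the subalgebras of $C(q)$ generated by $V_1$ and by $V_2$ commute and jointly generate $C(q)$, and the induced surjection $C(q_1)\otimes_F C(q_2)\to C(q)$ is an isomorphism by comparing dimensions. Under this isomorphism one has $J_\tau=J_{\tau|_{V_1}}\otimes J_{\tau|_{V_2}}$, since both are anti-automorphisms of $C(q)$ agreeing on the generating subspace $V_1\oplus V_2$. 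Iterating along $V=W\perp V_1\perp\cdots\perp V_m$ yields
\[(C(q),J_\tau)\simeq(C(q|_W),J_{\id})\otimes_F(C(q|_{V_1}),J_{\tau|_{V_1}})\otimes_F\cdots\otimes_F(C(q|_{V_m}),J_{\tau|_{V_m}}),\]
where $J_\tau$ restricts to $J_{\id}$ on $C(q|_W)$ because $\tau|_W=\id$. By (\ref{cint}) each factor $(C(q|_{V_i}),J_{\tau|_{V_i}})$ is isomorphic to $(M_4(F),t)$; and since $t$ on $M_n(F)$ is the adjoint involution of the bilinear form $\langle1,\cdots,1\rangle$ and $\langle1,\cdots,1\rangle\otimes\langle1,\cdots,1\rangle\simeq\langle1,\cdots,1\rangle$ in characteristic $2$, it follows that $(M_4(F),t)^{\otimes m}\simeq(M_{4^m}(F),t)=(M_{2^{2m}}(F),t)=(M_{2^s}(F),t)$. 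Substituting gives $(C(q),J_\tau)\simeq(C(q|_W),J_{\id})\otimes_F(M_{2^s}(F),t)$, as required.

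The only point needing a little care is the compatibility of the tensor decomposition with the natural involution; once one notes that in characteristic $2$ the graded and ungraded tensor products of Clifford algebras coincide (so no discriminant twist intervenes) and that an anti-automorphism is determined by its values on a generating set, the argument is mere bookkeeping, the substantive content already residing in (\ref{char}) and (\ref{cint}).
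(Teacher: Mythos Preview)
Your argument is correct and follows essentially the same route as the paper: decompose via (\ref{char}\,(1)), split the Clifford algebra with involution as a tensor product over the orthogonal summands, and apply (\ref{cint}) to each $4$-dimensional block. The only difference is cosmetic: where you spell out by hand that in characteristic $2$ the Clifford algebra of an orthogonal sum is the ungraded tensor product compatibly with the natural involutions, the paper simply cites \cite[(4.4)]{mahmoudi} for that step.
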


\begin{proof}
The result follows from (\ref{char} (1)), \cite[(4.4)]{mahmoudi} and (\ref{cint}).
\end{proof}

\begin{lem}\label{tota}
Let $(V,q)$ be a quadratic space over $F$.
If $\tau$ is an involution in $(V,q)$, then $(C(q),J_\tau)$ is totally decomposable.
\end{lem}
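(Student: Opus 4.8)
The plan is to reduce to the two indecomposable cases using the decomposition theorem already at hand. Since $\car F = 2$ and $\tau$ is an involution in $O(V,q)$, Lemma~\ref{tauid} applies (as $(\tau-\id)^2 = \tau^2 - 2\tau + \id = \tau^2 - \id = 0$ in characteristic $2$), so $\r(\tau) \subseteq \k(\tau)$ and Theorem~\ref{char} is available. Pick a complement $W$ of $\r(\tau)$ in $\k(\tau)$; then $V = W \perp V_1 \perp \cdots \perp V_m$ into $\tau$-invariant regular subspaces where $\tau|_W = \id$, each $V_i$ is either $4$-dimensional with $\tau|_{V_i}$ an interchange isometry (if $\omega_\tau$ is alternating) or $2$-dimensional with $\tau|_{V_i}$ a reflection (if $\omega_\tau$ is nonalternating). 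By \cite[(4.4)]{mahmoudi} this orthogonal decomposition of $(V,q,\tau)$ induces a tensor decomposition $(C(q), J_\tau) \simeq (C(q|_W), J_{\id}) \otimes \bigotimes_{i=1}^m (C(q|_{V_i}), J_{\tau|_{V_i}})$, so it suffices to show each tensor factor is totally decomposable.

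For the factors coming from the $V_i$: if $\tau|_{V_i}$ is a reflection on a $2$-dimensional space, then $C(q|_{V_i})$ is a quaternion algebra and $(C(q|_{V_i}), J_{\tau|_{V_i}})$ is trivially totally decomposable; if $\tau|_{V_i}$ is an interchange isometry on a $4$-dimensional space, then by Corollary~\ref{cint} we have $(C(q|_{V_i}), J_{\tau|_{V_i}}) \simeq (M_4(F), t) \simeq (M_2(F), t) \otimes (M_2(F), t)$, again totally decomposable. The remaining factor is $(C(q|_W), J_{\id})$, where $\tau$ acts as the identity on $W$. Here $J_{\id}$ is the involution on $C(q|_W)$ fixing all of $W$; this is the "standard" involution appearing in the tensor decomposition $C(q|_W) \simeq \bigotimes (C(q|_{W_j}))$ over an orthogonal-type splitting of $(W, q|_W)$ into $2$- and $1$-dimensional pieces, and on each quaternion (or étale) factor the induced involution is again of the required kind, so $(C(q|_W), J_{\id})$ is totally decomposable.

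The main obstacle is handling the factor $(C(q|_W), J_{\id})$ cleanly: one must verify that the Shapiro involution induced by $\id_W$ is compatible with a quaternionic decomposition of $C(q|_W)$, i.e. that it restricts to an involution on each quaternion tensor factor. In characteristic $2$ a regular quadratic space $W$ splits as an orthogonal sum of hyperbolic planes (and possibly a nondefective piece); each plane contributes a quaternion algebra $C(q|_{W_j})$, and $J_{\id}$ fixing the generators of $W_j$ is precisely the canonical (symplectic) involution on that quaternion algebra, which is of the first kind — so the tensor product is totally decomposable by definition. Alternatively, and perhaps more economically, one can invoke \cite[(4.4)]{mahmoudi} once more together with the known structure of $(C(q|_W), J_{\id})$ as the image of the identity isometry, or simply cite that $J_{\id}$ on a Clifford algebra is the standard involution, which is well known to be totally decomposable. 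With that step in place, a tensor product of totally decomposable algebras with involution is totally decomposable, completing the proof.
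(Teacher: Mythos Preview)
Your proof is correct and follows essentially the same route as the paper: decompose $V$ via Theorem~\ref{char}, pass to a tensor decomposition of $(C(q),J_\tau)$ via \cite[(4.4)]{mahmoudi}, and handle each factor separately, with the $W$-factor split further into regular planes whose Clifford algebras are quaternion algebras carrying $J_{\id}$. One small correction on the $W$-factor: in characteristic~$2$ a regular quadratic space has no $1$-dimensional orthogonal summands and its $2$-dimensional regular summands need not be hyperbolic planes, but this does not affect your argument since each such plane still yields a quaternion algebra on which $J_{\id}$ is (as you say) the canonical symplectic involution.
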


\begin{proof}
By (\ref{char}), there exists a decomposition $V=W\perp V_1\perp\cdots\perp V_m$ into regular subspaces such that $\tau|_W=\id$ and either $\dim_FV_i=4$ and $\tau_i:=\tau|_{V_i}$ is an interchange isometry, or $\dim_FV_i=2$ and $\tau_i$ is a reflection.
By \cite[(4.4)]{mahmoudi} we have
\[(C(q),J_\tau)\simeq(C(q|_W),J_{\id})\otimes(C(q|_{V_1}),J_{\tau_1})\otimes\cdots\otimes(C(q|_{V_m}),J_{\tau_m}).\]
If $\tau_i$ is a reflection, then $(C(q|_{V_i}),J_{\tau_i})$ is a quaternion algebra with involution.
Thus, in view of (\ref{cint}), it suffices to show that $(C(q|_W),J_{\id})$ is totally decomposable.
Since $W$ is regular, one can write $W=W_1\perp\cdots\perp W_k$, where $k=\frac{1}{2}\dim_FW$ and every $W_i$ is a $2$-dimensional regular subspace of $W$.
Thus, $(C(q|_W),J_{\id})\simeq\bigotimes_{i=1}^k(C(q|_{W_i}),J_{\id})$ is totally decomposable.
\end{proof}

The next result states the relation between the subalgebra $\Phi(C(q),J_\tau)$ and the Wall form of $\tau$.
Recall that if $(V,\mathfrak{b})$ is a symmetric bilinear space over a field $F$,
the map $\phi_\mathfrak{b}:V\rightarrow F$ defined by $\phi_\mathfrak{b}(v)=\mathfrak{b}(v,v)$ is a quadratic form, called the {\it associated
quadratic form} of $\mathfrak{b}$.

\begin{prop}\label{cliff}
Let $(V,q)$ be a quadratic space over $F$ and let $\tau\in O(V,q)$ be an involution such that $\r(\tau)=\k(\tau)$.
If $\phi$ is the associated quadratic form of $\omega_\tau$, then $\Phi(C(q),J_\tau)\simeq C(\phi)\simeq C(q|_{\r(\tau)})$.
\end{prop}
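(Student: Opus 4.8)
The plan is to exhibit, inside $C(q)$, an explicit subalgebra isomorphic to $C(q|_{\r(\tau)})$ that satisfies the four properties characterizing $\Phi(C(q),J_\tau)$ recalled above, and then to conclude via the uniqueness of that subalgebra. Throughout, write $U=\r(\tau)=\k(\tau)$ and let $2n=\dim_FV$, so that $\deg_FC(q)=2^n$. Since $\r(\tau)=\k(\tau)$ and $\dim_F\r(\tau)+\dim_F\k(\tau)=2n$, we get $\dim_FU=n$; and as $\r(\tau)\subseteq\k(\tau)$, (\ref{tauid}) gives $(\tau-\id)^2=0$ and $\mathfrak{b}_q|_U=0$, so $U$ is a Lagrangian of the nondegenerate alternating space $(V,\mathfrak{b}_q)$. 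Two preliminary remarks: first, $J_\tau$ is orthogonal by (\ref{res}) and $(C(q),J_\tau)$ is totally decomposable by (\ref{tota}), so $\Phi(C(q),J_\tau)$ is defined; second, since $\car F=2$ we have $\omega_\tau(v,v)=-q(v)=q(v)$ for $v\in U$, so the associated quadratic form of $\omega_\tau$ is exactly $q|_U$ and the isomorphism $C(\phi)\simeq C(q|_{\r(\tau)})$ is immediate. Thus only $\Phi(C(q),J_\tau)\simeq C(q|_{\r(\tau)})$ has to be proved.

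For the candidate subalgebra, the isometric inclusion $U\hookrightarrow V\hookrightarrow C(q)$ induces an $F$-algebra homomorphism $C(q|_U)\to C(q)$, and it is injective: extending a basis of $U$ to a basis of $V$ and comparing the standard monomial bases shows the $u$-only monomials remain linearly independent in $C(q)$. Let $S\subseteq C(q)$ be its image, i.e.\ the $F$-subalgebra generated by $U$. For $u,u'\in U$ we have $uu'+u'u=\mathfrak{b}_q(u,u')=0$, hence $uu'=u'u$ as $\car F=2$; so $S$ is commutative and $S\simeq C(q|_U)$ has dimension $2^n=\deg_FC(q)$, which is property (ii). Since $U=\k(\tau)$ is fixed pointwise by $\tau$, we get $J_\tau(u)=\tau(u)=u$ for $u\in U$, hence $S\subseteq\sym(C(q),J_\tau)$. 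Each $u\in U$ satisfies $u^2=q(u)\in F$, so the monomials in a basis of $U$ square into $F$; these monomials commute, and in characteristic $2$ so does every $F$-linear combination of them, giving property (i). Finally, if $\{y_1,\dots,y_k\}$ generates $S$ as an $F$-algebra, then by (i) the $2^k$ products $\prod y_i^{e_i}$ with $e_i\in\{0,1\}$ span $S$, forcing $2^k\ge\dim_FS=2^n$; as a basis of $U$ furnishes $n$ generators, $S$ is generated by exactly $n$ elements, which is property (iv).

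It remains to check property (iii), $C_{C(q)}(S)=S$; this is the main point. Since $U$ is a Lagrangian of $(V,\mathfrak{b}_q)$, choose a complementary Lagrangian and a corresponding basis $(u_1,\dots,u_n,w_1,\dots,w_n)$ of $V$ with $\mathfrak{b}_q(u_i,u_j)=\mathfrak{b}_q(w_i,w_j)=0$, $\mathfrak{b}_q(u_i,w_j)=\delta_{ij}$ and $U=\sum_iFu_i$. Then $V=P_1\perp\cdots\perp P_n$ with $P_i=Fu_i+Fw_i$ a regular plane, so $C(q)\simeq C(q|_{P_1})\,\hat{\otimes}\,\cdots\,\hat{\otimes}\,C(q|_{P_n})$ (graded tensor product), each $C(q|_{P_i})$ a quaternion algebra, and $S$ is the subalgebra generated by the $u_i$, one lying in each factor. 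Inside $C(q|_{P_i})$ the element $u_i$ is noncentral with $u_i^2\in F$, so $F[u_i]=C_{C(q|_{P_i})}(u_i)$ is a maximal commutative subalgebra and is self-centralizing. Because $\car F=2$ all Koszul signs vanish, so graded (super)centralizers coincide with ordinary centralizers; the double-centralizer theorem for graded tensor products of graded central simple algebras then gives $C_{C(q)}(S)=F[u_1]\,\hat{\otimes}\,\cdots\,\hat{\otimes}\,F[u_n]=S$. With (i), (ii), (iii), (iv) all verified, the uniqueness of the subalgebra $\Phi(C(q),J_\tau)$ yields $\Phi(C(q),J_\tau)\simeq S\simeq C(q|_U)=C(q|_{\r(\tau)})$. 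I expect the bookkeeping in (iii) — organising the decomposition along the Lagrangian $U$ and checking that characteristic $2$ trivialises the grading in the double-centralizer theorem — to be the only delicate step; an alternative that avoids graded tensor products is to run the argument on the decomposition $V=W\perp V_1\perp\cdots\perp V_m$ of (\ref{char}) (here $W=0$), using (\ref{cint}), the computation of $\Phi$ for $(M_2(F),t)$ and for quaternion algebras with orthogonal involution, and multiplicativity of $\Phi$ under tensor products.
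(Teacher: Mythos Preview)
Your proof is correct and follows the same overall strategy as the paper: identify the subalgebra $S$ of $C(q)$ generated by $\r(\tau)$ with $C(q|_{\r(\tau)})\simeq C(\phi)$, and verify the four defining properties of $\Phi(C(q),J_\tau)$. The difference lies in property (iii). The paper dispatches self-centralization in one line: it first observes that $S$ is a Frobenius algebra (by \cite[(3.4)]{mahmoudi2}), and then invokes the general fact from \cite[(2.2.3 (1))]{jacob} that a Frobenius subalgebra of a central simple algebra whose dimension equals the degree is self-centralizing. You instead choose a complementary Lagrangian, split $V$ into orthogonal planes $P_i$, use that in characteristic~$2$ the graded tensor product of the $C(q|_{P_i})$ coincides with the ordinary tensor product, and compute $C_{C(q)}(S)$ factor by factor via $C_{Q_i}(F[u_i])=F[u_i]$. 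Your route is more self-contained (no appeal to the Frobenius property or to Jacobson's book) and makes the geometry of the Lagrangian $U$ visible, at the cost of a longer argument; the paper's route is shorter but relies on two external results. Your explicit checks of injectivity of $C(q|_U)\to C(q)$ and of property (iv) are also more detailed than the paper's, which simply asserts $\dim_FC(\phi)=2^s$ and that $C(\phi)$ is generated by $s$ elements.
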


\begin{proof}
Since $\phi(v)=\omega_\tau(v,v)=-q(v)$ for every $v\in \r(\tau)$, we have $\phi= -q|_{\r(\tau)}$.
Thus,
$C(q|_{\r(\tau)})\simeq C(\phi)$.
By (\ref{tauid}), $\phi$ is totally singular, hence $C(\phi)$ is commutative.
Let $(u_1,\cdots,u_s)$ be a basis of $\r(\tau)$ over $F$, where $s=\dim_F\r(\tau)=\frac{1}{2}\dim_FV$.
Since for every $i$, $u_i\in\sym(C(q),J_\tau)$ and $u_i^2\in F$, the commutativity of $C(\phi)$ implies that $C(\phi)\subseteq\sym(C(q),J_\tau)$ and $x^2\in F$ for every $x\in C(\phi)$.
Also, $\dim_FC(\phi)=2^s$ and $C(\phi)$ is generated as an $F$-algebra by $u_1,\cdots,u_s$, hence $C(\phi)$ is a Frobenius algebra by \cite[(3.4)]{mahmoudi2}.
Finally, as $\dim_FC(\phi)=2^s=\deg_FC(q)$, $C(\phi)\subseteq C(q)$ is self-centralizing by \cite[(2.2.3 (1))]{jacob}, thus $C(\phi)\simeq\Phi(C(q),J_\tau)$.
\end{proof}

\begin{thm}\label{clif}
Let $(V,q)$ be a quadratic space over $F$ and let $\tau\in O(V,q)$ be an involution such that $\r(\tau)=\k(\tau)$.
Let $s=\dim_F\r(\tau)=\frac{1}{2}\dim_FV$.
\begin{itemize}
\item[(1)] If $\omega_\tau$ is alternating then $(C(q),J_\tau)\simeq(M_{2^s}(F),t)$ and $\mathfrak{Pf}(C(q),J_\tau)\simeq\lla1,\cdots,1\rra$.
\item[(2)]
If $\omega_\tau$ is nonalternating then every orthogonal basis of $\r(\tau)$ is a set of alternating generators of $\Phi(C(q),J_\tau)$.
In particular, if $\omega_\tau\simeq\langle\alpha_1,\cdots,\alpha_s\rangle$ for $\alpha_1,\cdots,\alpha_s\in F^\times$ then $\mathfrak{Pf}(C(q),J_\tau)\simeq\lla\alpha_1,\cdots,\alpha_s\rra$.
\end{itemize}
\end{thm}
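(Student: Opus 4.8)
The plan is to handle the two cases of (\ref{clif}) in turn, in each case first identifying $(C(q),J_\tau)$ up to isomorphism via the decomposition of (\ref{char}) and then reading off the Pfister invariant from the resulting quaternion factors; note that $\Phi(C(q),J_\tau)$ and $\mathfrak{Pf}(C(q),J_\tau)$ are defined, since $(C(q),J_\tau)$ is totally decomposable by (\ref{tota}) and $J_\tau$ is orthogonal by (\ref{res}) as $\r(\tau)=\k(\tau)$. For part (1), the complement $W$ of $\r(\tau)$ in $\k(\tau)$ is zero, so (\ref{inter}) gives $(C(q),J_\tau)\simeq(M_{2^s}(F),t)$ at once (alternatively, combine (\ref{char} (1)), \cite[(4.4)]{mahmoudi} and (\ref{cint}) to obtain $(C(q),J_\tau)\simeq(M_4(F),t)^{\otimes m}$ with $s=2m$). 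To compute the Pfister invariant I would use that the transpose involution on $M_n(F)=\End_F(F^n)$ is adjoint to the diagonal bilinear form $\langle1,\dots,1\rangle$ and that adjoint involutions multiply under tensor products; hence $(M_2(F),t)^{\otimes s}$ is adjoint to $\langle1,1\rangle^{\otimes s}=\langle1,\dots,1\rangle$, that is, to the transpose on $M_{2^s}(F)$. This exhibits $(C(q),J_\tau)$ as a tensor product of $s$ copies of $(M_2(F),t)$, and by (\ref{sp}) we may write $(M_2(F),t)\simeq(C(q'),J_{\tau'})$ for a reflection $\tau'$ of trivial spinor norm, so \cite[(4.11)]{mahmoudi} gives $\disc t=\disc J_{\tau'}=\theta(\tau')=1$. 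Therefore $\mathfrak{Pf}(C(q),J_\tau)\simeq\lla1,\dots,1\rra$.

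For part (2), I would fix an orthogonal basis $(u_1,\dots,u_s)$ of $(\r(\tau),\omega_\tau)$ and invoke (\ref{char} (2)) (with $W=0$) to obtain a decomposition $V=V_1\perp\cdots\perp V_s$ into $\tau$-invariant regular planes with $\tau|_{V_i}=\tau_{u_i}$; picking $w_i\in V_i$ with $\tau(w_i)-w_i=u_i$ gives $V_i=Fu_i+Fw_i$. Set $\alpha_i:=\omega_\tau(u_i,u_i)$, so that $\omega_\tau\simeq\langle\alpha_1,\dots,\alpha_s\rangle$ and, since $\omega_\tau$ is nondegenerate, $\alpha_i\ne0$. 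By (\ref{cliff}), $\Phi(C(q),J_\tau)\simeq C(q|_{\r(\tau)})=F[u_1,\dots,u_s]$, which is commutative, and $u_i^2=q(u_i)=-\omega_\tau(u_i,u_i)=\alpha_i\in F^\times$ in $C(q)$ (using $\car F=2$). It remains to check the alternating conditions. Each $u_i$ is symmetric since $u_i\in\k(\tau)$, and $u_i=w_i+\tau(w_i)=w_i+J_\tau(w_i)\in\alt(C(q),J_\tau)$ as $\car F=2$. Because the $V_i$ are pairwise orthogonal, elements of distinct blocks commute in $C(q)$, so for $1\le i_1<\cdots<i_l\le s$ the element $a=w_{i_1}u_{i_2}\cdots u_{i_l}$ satisfies
\[a+J_\tau(a)=\bigl(w_{i_1}+J_\tau(w_{i_1})\bigr)u_{i_2}\cdots u_{i_l}=u_{i_1}u_{i_2}\cdots u_{i_l},\]
whence $u_{i_1}\cdots u_{i_l}\in\alt(C(q),J_\tau)$. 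Thus $\{u_1,\dots,u_s\}$ is a set of alternating generators of $\Phi(C(q),J_\tau)$, and as $u_i^2=\alpha_i$ the formula recalled before (\ref{res}) yields $\mathfrak{Pf}(C(q),J_\tau)\simeq\lla\alpha_1,\dots,\alpha_s\rra$.

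The step I expect to be the main obstacle is this last verification in part (2): the products $u_{i_1}\cdots u_{i_l}$ land in $\sym(C(q),J_\tau)$ automatically, and showing they actually lie in the smaller set $\alt(C(q),J_\tau)$ is where the decomposition of (\ref{char} (2)) is essential, as it furnishes preimages $w_i$ lying in mutually orthogonal blocks, so that the alternating expression $u_{i_1}=w_{i_1}+J_\tau(w_{i_1})$ can be pushed past the remaining $u_{i_j}$ without obstruction. The only other points requiring care are the triviality of $\disc t$ on $(M_2(F),t)$ in characteristic $2$ and the identification $(M_2(F),t)^{\otimes s}\simeq(M_{2^s}(F),t)$, both dealt with as indicated above.
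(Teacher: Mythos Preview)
Your proposal is correct and, for part~(2), essentially identical to the paper's proof: both invoke (\ref{char}~(2)) to obtain the orthogonal decomposition into reflection planes, pick preimages $w_i\in V_i$ of the $u_i$, and verify $u_{i_1}\cdots u_{i_l}\in\alt(C(q),J_\tau)$ via the element $w_{i_1}u_{i_2}\cdots u_{i_l}$, using that the blocks are mutually orthogonal so that $J_\tau(w_{i_1})=\tau(w_{i_1})$ commutes past the remaining $u_{i_j}$. For part~(1), the paper simply cites \cite[(5.7)]{mahmoudi2} after applying (\ref{inter}), whereas you give a self-contained computation: you exhibit $(M_{2^s}(F),t)\simeq(M_2(F),t)^{\otimes s}$ via the tensor behaviour of adjoint involutions and then check $\disc t=1$ on $M_2(F)$ through (\ref{sp}) and \cite[(4.11)]{mahmoudi}. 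This is a legitimate and slightly more explicit alternative, though it reproves a special case of what \cite[(5.7)]{mahmoudi2} already provides.
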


\begin{proof}
The first part follows from (\ref{inter}) and \cite[(5.7)]{mahmoudi2}.
To prove the second part let $(u_1,\cdots,u_s)$ be an orthogonal basis of $\r(\tau)$.
Then for $i=1,\cdots,s$, we have $u_i^2=q(u_i)\in F$ in $C(q)$.
The equality $\mathfrak{b}_q(u_i,u_j)=0$ implies that $u_iu_j=u_ju_i\in C(q)$ for every $i,j$.
By (\ref{char} (1)) one can write
$\tau=\tau_{u_1}\perp\cdots\perp\tau_{u_s}$, where every $\tau_{u_i}$ is a reflection of a $2$-dimensional $\tau$-invariant subspaces $V_i$ of $V$.
For $i=1,\cdots,s$, choose an element $v_i\in V_i$ such that $u_i=v_i-\tau(v_i)$.
Then
\[u_{i_1}\cdots u_{i_l}=v_{i_1}u_{i_2}\cdots u_{i_l}-J_\tau(v_{i_1}u_{i_2}\cdots u_{i_l})\in\alt(C(q),J_\tau),\]
for every $1\leq i_1<\cdots<i_l\leq s$ and $1\leq l\leq s$.
Since $\r(\tau)=\k(\tau)$, we have $\dim_FV=2s$, hence $\deg_FC(q)=2^s$
and $\{u_1,\cdots,u_s\}$ is a set of alternating generators of $\Phi(C(q),J_\tau)$.
\end{proof}

The proof of the following result is left to the reader.
\begin{lem}\label{totimes}
If $X$ is a symmetric matrix with $X^2\in F$, then $X^2\in F^2$.
\end{lem}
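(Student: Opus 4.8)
The plan is to read off $\alpha$, where $X^2 = \alpha I_n$, as an explicit square from a single diagonal entry of $X^2$, using that $\car F = 2$.

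Write $X = (x_{ij})$, so that $x_{ij} = x_{ji}$ by symmetry. First I would compute the $(1,1)$-entry of $X^2$, namely $(X^2)_{11} = \sum_{k=1}^n x_{1k} x_{k1}$; by symmetry this equals $\sum_{k=1}^n x_{k1}^2$. Since $\car F = 2$, the Frobenius identity $\left(\sum_{k} x_{k1}\right)^2 = \sum_{k} x_{k1}^2$ holds (all mixed terms carry a factor $2$ and vanish), so $(X^2)_{11} = \left(\sum_{k=1}^n x_{k1}\right)^2$. As $X^2 = \alpha I_n$, this gives $\alpha = (X^2)_{11} = \left(\sum_{k=1}^n x_{k1}\right)^2 \in F^2$, which is the assertion. (Any other diagonal index works equally well, yielding $\alpha = \left(\sum_k x_{kj}\right)^2$ for each $j$; equivalently, one may argue invariantly via $\alpha = e_1^{\,t} X^2 e_1 = (Xe_1)^{t}(Xe_1) = \sum_k (Xe_1)_k^2 \in F^2$.)

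There is no real obstacle here; the only points worth stating explicitly are that ``$X^2 \in F$'' is to be read as $X^2 \in F \cdot I_n$, so that $\alpha$ is a well-defined scalar, and that the characteristic-two identity $\left(a_1 + \cdots + a_n\right)^2 = a_1^2 + \cdots + a_n^2$ is exactly what turns the diagonal entry $\sum_k x_{1k} x_{k1}$, once symmetry is invoked, into a perfect square. No results from earlier in the paper are needed.
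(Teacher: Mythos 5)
Your proof is correct, and since the paper explicitly leaves this lemma to the reader, your diagonal-entry computation (using symmetry and the characteristic-two identity $(\sum_k a_k)^2=\sum_k a_k^2$, valid here because $\car F=2$ is assumed throughout the section) is exactly the kind of elementary argument intended. Nothing further is needed.
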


\begin{thm}
Let $(V,q)$ be a $2n$-dimensional quadratic space over $F$.
For an involution $\tau$ of $(V,q)$ with $\r(\tau)=\k(\tau)$ the following conditions are equivalent:
\begin{itemize}
\item[$(1)$] $(C(q),J_\tau)\simeq(M_{2^n}(F),t)$.
\item[$(2)$] $q(x)\in F^2$ for every $x\in\r(\tau)$.
\item[$(3)$] $\omega_\tau(x,x)\in F^2$ for every $x\in\r(\tau)$.
\item[$(4)$] Either $\omega_\tau\simeq n\mathbb{H}_{-1}$ or $\omega_\tau\simeq n\langle1\rangle$.
\end{itemize}
\end{thm}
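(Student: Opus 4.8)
The plan is to prove the cycle $(1)\Rightarrow(2)\Rightarrow(4)\Rightarrow(1)$, together with the (trivial) observation that $(2)$ and $(3)$ are the same condition, using the decomposition results of the previous sections together with~(\ref{sp}) and~(\ref{totimes}). First, $\r(\tau)=\k(\tau)$ forces $\dim_F\r(\tau)=n$ and, by~(\ref{tauid}), $(\tau-\id)^2=0$; hence the Wall form $\omega_\tau$ is a nondegenerate symmetric bilinear form of dimension $n$ with $\omega_\tau(x,x)=-q(x)=q(x)$ for $x\in\r(\tau)$ (as $\car F=2$), so that conditions $(2)$ and $(3)$ coincide. By~(\ref{rem} (2)), $\omega_\tau$ is either nonalternating or alternating, and an alternating $\omega_\tau$ is hyperbolic, which is the first alternative of~$(4)$. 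For $(1)\Rightarrow(2)$: fix an isomorphism $(C(q),J_\tau)\simeq(M_{2^n}(F),t)$; every $x\in\r(\tau)=\k(\tau)$ satisfies $J_\tau(x)=\tau(x)=x$, so its image is a symmetric matrix $X$, and since $x^2=q(x)\cdot1$ in $C(q)$ we get $X^2=q(x)I$; then~(\ref{totimes}) gives $q(x)\in F^2$, which is~$(2)$.

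For $(2)\Rightarrow(4)$ I would split according to~(\ref{rem} (2)). If $\omega_\tau$ is alternating it is hyperbolic, so the first alternative of~$(4)$ holds. If $\omega_\tau$ is nonalternating, choose an orthogonal basis $(u_1,\dots,u_n)$ of $(\r(\tau),\omega_\tau)$ and put $\alpha_i:=\omega_\tau(u_i,u_i)=q(u_i)\in F^\times$. Orthogonality gives $q(\sum_ic_iu_i)=\sum_ic_i^2\alpha_i$ for all $c_i\in F$, and evaluating $(2)$ at the $u_i$ forces each $\alpha_i\in F^2$, say $\alpha_i=\beta_i^2$ with $\beta_i\in F^\times$. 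Replacing $u_i$ by $\beta_i^{-1}u_i$ then produces an orthogonal basis of $\omega_\tau$ with all diagonal entries equal to $1$, i.e.\ $\omega_\tau\simeq n\langle1\rangle$, which is the second alternative of~$(4)$. (Conversely, in the second case of~$(4)$ one has $q(\sum_ic_iu_i)=(\sum_ic_i)^2\in F^2$, so $(4)\Rightarrow(2)$ as well, though this is not needed to close the cycle.)

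For $(4)\Rightarrow(1)$ I would again treat the two alternatives. If $\omega_\tau$ is hyperbolic (hence alternating), then~(\ref{clif} (1)), applied with $s=\dim_F\r(\tau)=n$, gives $(C(q),J_\tau)\simeq(M_{2^n}(F),t)$ directly. If $\omega_\tau\simeq n\langle1\rangle$, choose an orthogonal basis $(u_1,\dots,u_n)$ of $(\r(\tau),\omega_\tau)$ with $\omega_\tau(u_i,u_i)=1$, so $q(u_i)=1$. By~(\ref{char} (2)) there is an orthogonal decomposition $V=V_1\perp\cdots\perp V_n$ into $2$-dimensional $\tau$-invariant subspaces with $\tau|_{V_i}=\tau_{u_i}$; here $\r(\tau_{u_i})=Fu_i$, so the spinor norm $\theta(\tau_{u_i})=q(u_i)F^{\times2}=F^{\times2}$ is trivial, and~(\ref{sp}) gives $(C(q|_{V_i}),J_{\tau_{u_i}})\simeq(M_2(F),t)$ for each $i$. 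Finally, by \cite[(4.4)]{mahmoudi},
\[
(C(q),J_\tau)\simeq\bigotimes_{i=1}^n(C(q|_{V_i}),J_{\tau_{u_i}})\simeq(M_2(F),t)^{\otimes n}\simeq(M_{2^n}(F),t).
\]

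I expect the delicate point to be the second case of $(4)\Rightarrow(1)$: it relies on the value of the spinor norm of a reflection, $\theta(\tau_u)=q(u)F^{\times2}$ (so that $q(u_i)=1$ makes $\theta(\tau_{u_i})$ trivial, which is what lets~(\ref{sp}) apply), and on the stability of the transpose involution under tensor products, $(M_2(F),t)^{\otimes n}\simeq(M_{2^n}(F),t)$. Both are standard facts, but they are the only ingredients not already packaged in~(\ref{tauid}), (\ref{rem}), (\ref{char}), (\ref{clif}) and~(\ref{sp}); the remaining implications are routine computations with orthogonal bases and the identity $(a+b)^2=a^2+b^2$ in characteristic $2$.
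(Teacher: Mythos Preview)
Your proof is correct and follows essentially the same route as the paper: the same cycle of implications, with $(1)\Rightarrow(2)$ via Lemma~\ref{totimes}, $(3)\Rightarrow(4)$ by diagonalizing and rescaling, and $(4)\Rightarrow(1)$ by combining the decomposition of Theorem~\ref{char}\,(2) with \cite[(4.4)]{mahmoudi} and Lemma~\ref{sp}. The only differences are cosmetic---the paper cites Corollary~\ref{inter} rather than Theorem~\ref{clif}\,(1) for the alternating case (equivalent here since $W=0$), and you spell out the rescaling step and the spinor-norm computation $\theta(\tau_{u_i})=q(u_i)F^{\times2}$ that the paper leaves implicit.
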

\begin{proof}
Let $f:(C(q),J_\tau)\simeq(M_{2^n}(F),t)$ be an isomorphism of algebras with involution.
For every $x\in \r(\tau)$,
$f(x)$ is a symmetric matrix, since $\tau(x)=x$.
As $f(x)^2\penalty 0=q(x)\in F$, we get $f(x)^2\in F^2$ by (\ref{totimes}), i.e., $q(x)\in F^2$.
This proves $(1) \Rightarrow (2)$.

The implications $(2) \Rightarrow (3)$ and $(3) \Rightarrow (4)$ are evident.
To prove $(4) \Rightarrow (1)$ observe that in view of (\ref{inter}), it suffices to consider the case where $\omega_\tau\simeq n\langle1\rangle$.
Let $(u_1,\cdots,u_n)$ be an orthogonal basis of $\omega_\tau$.
By (\ref{char} (2)) one can write $V=V_1\perp\cdots\perp V_n$, where every $V_i$ is a $2$-dimensional subspace of $V$ containing $u_i$ and $\tau|_{V_i}=\tau_{u_i}$.
Thus, \cite[(4.4)]{mahmoudi} and (\ref{sp}) imply that
\[(C(q),J_\tau)\simeq\textstyle \bigotimes\limits_{i=1}^n(C(q|_{V_i}),J_{\tau_{u_i}})\simeq \bigotimes\limits_{i=1}^n(M_{2}(F),t)\simeq(M_{2^n}(F),t).\qedhere\]
\end{proof}

\scriptsize
A.-H. Nokhodkar, {\tt
     a.nokhodkar@kashanu.ac.ir}\\
Department of Pure Mathematics, Faculty of Science, University of Kashan, P.~O. Box 87317-53153, Kashan, Iran.

\begin{thebibliography}{MM}
\bibitem{con} E. A. Connors, The structure of ${\rm O}\sp{\prime} (V)/{\rm DO}(V)$ in the defective case. {\it J. Algebra} {\bf 34} (1975), 74--83.

\bibitem{dolphin3} A. Dolphin, Orthogonal Pfister involutions in characteristic two. {\it J. Pure Appl. Algebra} {\bf
    218} (2014), no. 10, 1900--1915.

\bibitem{elman}
R. Elman, N. Karpenko, A. Merkurjev,
{\it The algebraic and geometric theory of quadratic forms.}
American Mathematical Society Colloquium Publications, 56. American Mathematical Society, Providence, RI, 2008.

\bibitem{grove} L. C. Grove, {\it Classical Groups and Geometric Algebra}, Graduate Studies in Mathematics, 39. American Mathematical Society, Providence, RI, 2002.

\bibitem{hahn} A. Hahn, Unipotent elements and the spinor norms of Wall and Zassenhaus. {\it Arch. Math.} {\bf 32} (1979), no. 2, 114--122.

\bibitem{hahn3} A. Hahn, The elements of the orthogonal group $\Omega\sb n(V)$ as products of commutators of symmetries. {\it J. Algebra} {\bf 184} (1996), no. 3, 927--944.

\bibitem{hahn2} A. Hahn, The Zassenhaus decomposition for the orthogonal group: properties and applications. Proceedings of the Conference on Quadratic Forms and Related Topics (Baton Rouge, LA, 2001). {\it Doc. Math}. 2001, Extra Vol., 165--181.

\bibitem{jacob} N. Jacobson, {\it Finite-dimensional division algebras over fields.} Springer-Verlag, Berlin, 1996.

\bibitem{knus} M.-A. Knus, A. S. Merkurjev, M. Rost, J.-P. Tignol, {\it The book of involutions}. American Mathematical Society Colloquium Publications, 44. American Mathematical Society, Providence, RI, 1998.

\bibitem{knup} F. Kn\"uppel, The length-problem for Eichler-transformations. {\it Forum Math.} {\bf 10} (1998), no. 1, 59--74.

\bibitem{mahmoudi4} M. G. Mahmoudi, Orthogonal symmetries and Clifford algebras. {\it Proc. Indian Acad. Sci. Math. Sci.} {\bf 120} (2010), no. 5, 535--561.

\bibitem{mahmoudi2} M. G. Mahmoudi, A.-H. Nokhodkar, On totally decomposable algebras with involution in characteristic two. LAG preprint server, http://www.math.uni-bielefeld.de/LAG/man/549.html.

\bibitem{mahmoudi} M. G. Mahmoudi, A.-H. Nokhodkar, Involutions of a Clifford algebra induced by involutions of orthogonal group in characteristic $2$. {\it Comm. Algebra} {\bf 43} (2015), no. 9, 3898--3919.

\bibitem{shapiro} D. B. Shapiro, {\it Compositions of quadratic forms}.
de Gruyter Expositions in Mathematics, 33. Walter de Gruyter \& Co.,
Berlin, 2000.

\bibitem{snap} E. Snapper, R. J. Troyer, {\it Metric affine geometry}. Second edition. Dover Books on Advanced Mathematics. Dover Publications, Inc., New York, 1989.

\bibitem{wall2} G. E. Wall, On the conjugacy classes in the unitary, symplectic and orthogonal groups. {\it J. Austral. Math. Soc.} {\bf 3} (1963) 1--62.

\bibitem{wall} G. E. Wall, The structure of a unitary factor group. {\it Inst. Hautes Études Sci. Publ. Math.} No. 1 1959 23 pp. (1959).

\bibitem{wiitala} S. A. Wiitala, Factorization of involutions in characteristic two orthogonal groups: an application of the Jordan form to group theory, {\it Linear Algebra Appl.}, {\bf 21} (1978), no. 1, 59--64.

\bibitem{zass} H. Zassenhaus, On the spinor norm. {\it Arch. Math.} {\bf 13} (1962) 434--451.
\end{thebibliography}
\end{document}